\newtheorem{thm}{Theorem}
\newtheorem{cor}{Corollary}
\newtheorem{defn}{Definition}
\newtheorem{prob}{Problem}
\newtheorem{prop}{Proposition}
\newtheorem{assumption}{Assumption}
\DeclareMathOperator*{\argmax}{arg\,max}
\DeclareMathOperator*{\argmin}{arg\,min}
\def\BibTeX{{\rm B\kern-.05em{\sc i\kern-.025em b}\kern-.08em
    T\kern-.1667em\lower.7ex\hbox{E}\kern-.125emX}}
\begin{document}

\title{Multi-agent, Multi-target Path Planning in Markov Decision Processes}
\author{Farhad Nawaz and Melkior Ornik, \textit{Senior Member, IEEE}
\thanks{This work was supported by NASA Early Stage Innovations grant no. 80NSSC19K0209, DARPA grant no. HR001120C0065 and Office of Naval Research grant no. N00014-23-1-2505.}
\thanks{F.~Nawaz was with the University of Illinois Urbana-Champaign, Urbana, IL 61801, USA. He is now with the University of Pennsylvania, Philadelphia, PA 19104, USA (e-mail: farhadn@seas.upenn.edu).}
\thanks{M.~Ornik is with the Department of Aerospace Engineering and the Coordinated Science Laboratory, University of Illinois Urbana-Champaign, Urbana, IL 61801, USA (e-mail: mornik@illinois.edu).}}

\maketitle

\begin{abstract}
Missions for autonomous systems often require agents to visit multiple targets in complex operating conditions. This work considers the problem of visiting a set of targets in minimum time by a team of non-communicating agents in a Markov decision process (MDP). The single-agent problem is at least NP-complete by reducing it to a Hamiltonian path problem. We first discuss an optimal algorithm based on Bellman's optimality equation that is exponential in the number of target states. Then, we trade-off optimality for time complexity by presenting a suboptimal algorithm that is polynomial at each time step. We prove that the proposed algorithm generates optimal policies for certain classes of MDPs. Extending our procedure to the multi-agent case, we propose a target partitioning algorithm that approximately minimizes the expected time to visit the targets. We prove that our algorithm generates optimal partitions for clustered target scenarios. We present the performance of our algorithms on random MDPs and gridworld environments inspired by ocean dynamics. We show that our algorithms are much faster than the optimal procedure and more optimal than the currently available heuristic.
\end{abstract}

\begin{IEEEkeywords}
Agents and Autonomous systems, Markov processes, Stochastic systems, Graph partitioning.
\end{IEEEkeywords}

\section{Introduction}
\label{sec:introduction}

A common high-level motion planning problem~\cite{oceans, multi_uncertain} is for a team of non-communicating agents to visit multiple target states in minimal time. Complex environmental conditions, e.g., underwater regions~\cite{real_ocean} and planetary bodies~\cite{mars} may make it difficult for agents to communicate with each other in real time. Motivated by the scenario of non-communicating agents, the high-level planning problem is naturally decoupled to a problem of assigning targets to each agent, and a subsequent single agent path planning problem with multiple target states. Namely, we first seek to find a strategy for a single agent to visit a set of target states in minimum time. Then, we consider a task allocation problem of assigning multiple targets to multiple agents such that the joint travel time to visit the targets by a team of agents is minimal.

The complex and partly unknown environment in many path planning scenarios has motivated the use of stochastic dynamics~\cite{stoch_traffic} that model the motion of an agent as a Markov decision process (MDP)~\cite{oceans}. An MDP is a mathematical framework that accounts for the lack of knowledge or complexity of the system dynamics by allowing for stochastic transitions between system states~\cite{puterman}. We interpret the problem of visiting multiple targets by a single agent in an MDP as an extension of a stochastic shortest path (SSP) problem~\cite{bertsekas}. An SSP problem seeks to find a policy that drives the agent operating on an MDP to a target state with minimal expected cost. Classical dynamic programming methods can be adopted for SSP problems~\cite{SSP_opt}. Since our objective is to visit multiple target states instead of a single target state, a modified approach is required. Hence, we propose a strategy that depends on the history of visited target states.


By pursuing a dynamic programming approach exploited in classical SSP problems~\cite{SSP_opt}, we solve our problem for global optimality on a product MDP~\cite{product} that includes the history of visited states in the state space. However, we show that our problem is at least NP-complete because it is a generalization of the Hamiltonian path problem~\cite{tsp_np}. Existing approaches based on approximate dynamic programming are computationally expensive to solve the multi-target problem and are tailored for model-free scenarios~\cite{RL_intro}. One novel contribution of this work is solving a planning problem that is locally optimal at each time step to trade-off optimality and time complexity, while utilizing the known stochastic model of the environment. We also prove that our algorithm is optimal for some classes of MDPs. A similar approach based on value iteration is used in~\cite{demining} to explore a set of targets on a gridworld environment with local sensor information. However, our approach is for a general MDP rather than a gridworld without any local sensor data.

Extending to the multi-agent case, we formulate a task allocation problem~\cite{multi_robot} with the objective of minimizing the time to visit the targets assigned to each agent. The combined target assignment and path planning problem on an MDP is a generalization of the multiple traveling salesman problem~($m$~-~TSP)~\cite{ant_mTSP} with stochastic transition dynamics. The $m$~-~TSP asks to find the shortest possible path for $m$ agents to visit a set of nodes on a complete weighted graph. The objective of task allocation in the $m$-TSP with stochastic transitions is to assign targets to each agent such that the expected time to visit the targets assigned to each agent is minimized. Existing work~\cite{magent_eval} related to $m$-TSP have considered only graphs and minimize the sum of costs between the agents, but we focus on MDPs and minimize the maximum cost between the agents. In our work, we propose a novel partitioning algorithm that builds upon a heuristic presented in~\cite{mTSP} which attempts to solve the task allocation problem in the $m$-TSP. We compute a complete weighted model graph whose nodes are the states of the original MDP and the weights are the optimal expected time to reach every pair of states. We then adopt the algorithm in~\cite{mTSP} for the model graph to generate a partition of the target states that attempts to minimize the time to visit the target states by multiple agents. Each agent then visits the targets assigned to them by following our suboptimal single agent policy to minimize the expected time to visit the target states. Though our partitioning algorithm is suboptimal in general, one contribution of our work is providing conditions of optimality for our heuristic procedure on MDPs where targets are \textit{clustered}. Clusters are defined based on the optimal expected time to reach a state from another state. The presence of clustered target states in an environment is a common scenario in various multi-agent missions~\cite{cluster1, cluster_miss1}.

The rest of the paper is organized as follows. Section~\ref{sec:prel} provides the necessary mathematical background. In Section~\ref{sec:probl}, we formally state the planning problem and the partitioning problem. In Section~\ref{sec:exact_sol_sec}, we discuss the optimal policy for the single agent case. We propose a novel suboptimal single agent planning procedure in Section~\ref{sec:sub_opt_single} and prove that it is optimal for certain classes of MDPs. In Section~\ref{sec:part_multiple}, we propose a partitioning algorithm for the multi-agent case and provide conditions of optimality for our heuristic procedure on clustered target states. Finally, we present the numerical results of our proposed algorithms in Section~\ref{sec:numerical} on random graphs, MDPs, and stochastic gridworld environments. In Section~\ref{sec:numerical}, we also compare the performance of our single agent heuristic to the nearest neighbor algorithm for graphs and conclude the paper with possible future directions in Section~\ref{sec:conclus}.

\section{Preliminaries}
\label{sec:prel}

In this paper, $|S|$ denotes the cardinality of set $S$ and $2^{S}$ indicates the set of all subsets of $S$. Notation $\mathbb{E}^{\pi}[X]$ denotes the expectation of a random variable $X$ when an agent on a Markov decision process (MDP) follows policy $\pi$, whereas $\mathbb{E}[X|Y=y]$ denotes the expectation of $X$ given a value $y$ for another random variable $Y$.

\subsection{Markov Decision Process}
\label{mdp_sec}

An MDP is a mathematical framework to model the motion of agents in a stochastic environment as defined below~\cite{puterman}.

\begin{defn}
A \emph{finite Markov decision process} is a tuple \({\mathcal{M}=(S,A,\mathcal{T})}\), where $S$ is a finite set of states, $A$ is a finite set of actions and ${\mathcal{T}:S\times A\times S\to[0,1]}$ is a transition probability function where ${\sum_{s'\in S}\mathcal{T}(s,a,s')=1 \ \textnormal{for all} \ s\in S,a\in A}$.
\end{defn}
The dynamics of an agent operating in an MDP $\mathcal{M}$ are given as follows. The agent in state $s\in S$ chooses an action $a\in A$ and transitions to a state $s' \in S$ in one time step with probability $\mathcal{T}(s,a,s')$.

A \textit{policy} $\pi_t$ for an agent is defined by the probability $\pi_t(a|s)$ that the agent in state $s$ takes action $a$ at time $t$. A policy $\pi_t$ applied on an MDP $\mathcal{M}$ generates a random process~\cite{multiple_ct} with transition probabilities ${\pi_t(a|s)\mathcal{T}(s,a,s') \ \textnormal{for all} \ a\in A, s\in S}$, at every time $t$. A policy is \textit{deterministic} if at every time $t$ and for all states $s\in S$, there exists some $a\in A$ such that $\pi_t(a|s)=1$. A policy is \textit{stationary} when it is time-independent, i.e., $\pi_t = \pi_{t'}$ for all $t,t'$. A stationary policy applied on an MDP $\mathcal{M}$ induces a \textit{Markov chain}~\cite{multiple_ct}. Every policy generates many possible \textit{paths} on the MDP. A sample path of an agent is a sequence of states $s_0 s_1 ... $, where $\pi_t(a|s_t)\mathcal{T}(s_t,a,s_{t+1})>0$ for all $t\geq0$.
\subsection{Value Function}
\label{vf_sec}

In order to encode the task objective, an MDP is often associated with a reward function $R:S\times A \times S \to \mathbb{R}$~\cite{puterman} and the objective is to maximize the long-term expected reward. A value function quantitatively describes how good a state is to satisfy the objective. The \textit{value function} of a \textit{discounted reward infinite horizon} MDP for a stationary deterministic policy $\pi$~\cite{puterman}  is
\begin{equation}
    V^{\pi}(s) = \mathbb{E}^{\pi}\left[\sum_{t=0}^{\infty}\gamma^t R(s_t,\pi(s_t),s_{t+1}) \bigg|s_0 = s\right],
    \label{value}
\end{equation}
where $\gamma$ is the discount factor and $\pi(s_t)=a_t$. The objective of maximizing the long-term expected reward is the optimization problem $\max_{\pi}V^{\pi}(s) \ \textnormal{for all} \ s\in S$, which returns the \textit{optimal value function} $V^*(s)$. We define an operator $T^{\pi}$~\cite{puterman} for the policy $\pi$ on some arbitrary value function $V(s)$ by
\begin{equation}
        T^{\pi}(V(s)) = \sum_{s' \in S}\mathcal{T}(s,\pi(s),s')\big(R(s,\pi(s),s') + \gamma V(s')\big).
    \label{T_pi}
\end{equation}
A function $V(s)$ is the true value function $V^{\pi}(s)$ for policy $\pi$ if and only if $V(s)=T^{\pi}(V(s))$. Similar to~\eqref{T_pi}, an operator $T$ is given by
\begin{equation}
T(V(s)) = \max_{a \in A}\sum_{s' \in S}\mathcal{T}(s,a,s')\big(R(s,a,s') + \gamma V(s')\big).
    \label{T}
\end{equation}
From Bellman's optimality principle~\cite{bertsekas}, $V(s)$ is the optimal value function $V^{*}(s)$ if and only if $V(s)=T(V(s))$. The operators $T^{\pi}$ and $T$ are monotonic, i.e., if ${V^1(s)\geq V^2(s)}$ for every $s\in S$, then $T^{\pi}(V^1(s))\geq T^{\pi}(V^2(s))$ and ${T(V^1(s))\geq T(V^2(s))}$~\cite{puterman}. The optimal policy $\pi^*$ that maximizes the long-term expected reward can be obtained from the optimal value function $V^*(s)$ as given below~\cite{bertsekas}:
\begin{equation}
\pi^*(s) = \argmax_{a\in A}\sum_{s' \in S}\mathcal{T}(s,a,s')\big(R(s,a,s') + \gamma V^*(s')\big).
\label{policy_optimal}
\end{equation}
As shown in Theorem 17.8 of~\cite{found_ML}, the optimal policy is deterministic for all finite MDPs. Hence, we consider only deterministic policies in this work.

In~\cite{bertsekas}, a cost $c(s,a,s') = -R(s,a,s')$ is associated with an MDP. A stochastic shortest path (SSP) problem~\cite{bertsekas} is a special case of the total cost infinite horizon problem where the agent should reach a goal state with minimum expected cost. In a SSP problem, (i) there is no discounting ($\gamma = 1$), (ii) the target state $s^g\in~S$ is \textit{absorbing}, i.e., $\mathcal{T}(s^g,a,s^g) = 1$ for all $a \in A$, and  (iii) the target state $s^g$ is \textit{cost-free}, i.e., $c(s^g,a,s) = 0$ for all $a \in A, s \in S$, whereas all other transitions incur a positive cost. A stationary deterministic policy $\pi$ is \textit{proper} if, when using the policy $\pi$, there is a positive probability that the agent will eventually reach the target state for all initial states~\cite{bertsekas}.

\subsection{Cover Time}
\label{ct_sec}

In simple terms, the cover time for a discrete-time finite-state random process is the time required by an agent to visit all the states, while the hitting time is the time required to visit a particular state. In this subsection, we formally define \textit{hitting time} and \textit{cover time} for a discrete-time finite-state random process ${X = \{X_t\}, \ \textnormal{where} \ X_t\in S}$ for all $t\geq0$~\cite{multiple_ct}.
\begin{defn}
\textnormal{Hitting time} of a state $s\in S$ is the first time the agent visits $s$, starting from $s_0\in S$. We denote it by the random variable $H_{X, s_0}^s$ and is defined by
\begin{equation}
H_{X, s_0}^s = \min\{t : X_t = s | X_0 = s_0\}.
\label{ht}
\end{equation}
\end{defn}
We denote $H_{\mathcal{M}, s_0}^{s, \pi}$ as the hitting time of state $s$ starting from $s_0$ for the MDP $\mathcal{M}$ with policy $\pi$, and ${\mathbb{E}^{\pi}[H_{\mathcal{M},s_0}^{s}]=\mathbb{E}\left[H_{\mathcal{M}, s_0}^{s, \pi}\right]}$ as the expected hitting time.
\begin{defn}
\textnormal{Cover time} of a set of states $\mathcal{V} \subseteq S$ is the time required by the agent to visit all the states $s \in \mathcal{V}$, starting from $s_0 \in S$. We denote it by the random variable $C_{X, s_0}^{\mathcal{V}}$ and is defined as
\begin{equation}
C_{X, s_0}^{\mathcal{V}} = \min\{t :  H_{X, s_0}^s \leq t \ \textnormal{for all} \ s \in \mathcal{V}\}.
\label{ct}
\end{equation}
\end{defn}
The equation for cover time~\eqref{ct} is equivalent to ${C_{X, s_0}^{\mathcal{V}}=\max_{s\in~\mathcal{V}}H_{X, s_0}^s}$. Similar to hitting time, we denote $C_{\mathcal{M}, s_0}^{\mathcal{V}, \pi}$ as the cover time of $\mathcal{V}$ starting from $s_0$ for the MDP $\mathcal{M}$ with policy $\pi$, and $\mathbb{E}^{\pi}[C_{\mathcal{M}, s_0}^{\mathcal{V}}] = \mathbb{E}\left[C_{\mathcal{M}, s_0}^{\mathcal{V}, \pi}\right]$ as the expected cover time. Since the MDP $\mathcal{M}$ is known in our work, we use the notation $H_{s_0}^{s, \pi}=H_{\mathcal{M}, s_0}^{s, \pi}$ and $C_{s_0}^{\mathcal{V}, \pi}=C_{\mathcal{M}, s_0}^{\mathcal{V}, \pi}$. A Markov chain is \textit{irreducible} if the agent can reach $s'$ starting from $s$ in a finite number of steps with non-zero probability, for every pair of states $s, s' \in S$. If the Markov chain induced by a stationary policy $\pi$ and MDP $\mathcal{M}$ is irreducible, then the expected cover time is finite~\cite{cover_finite}.

Utilizing the above preliminaries, we formulate the problem of optimal cover time in the subsequent section.

\section{Problem Formulation}
\label{sec:probl}

The objective considered in this work is for multiple agents to jointly visit multiple target states in minimal expected time. We make the following assumption on a priori known MDP~$\mathcal{M}$.
\begin{assumption}
There exists some stationary policy $\pi$ such that the resulting Markov chain induced by $\pi$ is irreducible.
\label{ass1}
\end{assumption}
If Assumption~\ref{ass1} does not hold, then there exists some $s\in S$ and $\mathcal{V}\subseteq S$ such that the expected cover time $\mathbb{E}^{\pi}\left[C^{\mathcal{V}}_{s}\right]$ is infinite for all policies~\cite{cover_finite}. 

We consider the following problem statement for a single agent to visit multiple targets.
\begin{prob}
\label{probl_ct}
Let an agent operate in an MDP $\mathcal{M}$. Let ${s_0\in S}$ be its initial state, $\mathcal{V}$ be the set of target states to be covered, and $\mathbb{E}^{\pi}\left[C^{\mathcal{V}}_{s_0}\right]$ be the expected cover time of $\mathcal{V}$ for an agent starting from state $s_0$ and acting under policy~$\pi$. Under Assumption~\ref{ass1}, find a control policy $\pi^*$ that solves the optimization problem
\begin{equation}
\pi^* = \argmin_{\pi}\left(\mathbb{E}^{\pi}\left[C^{\mathcal{V}}_{s_0}\right]\right).
    \label{prob}
\end{equation}
\end{prob}

The expected cover time $\mathbb{E}^{\pi}\left[C^{\mathcal{V}}_{s_0}\right]$ depends both on the initial state $s_0\in S$ and the target set~$\mathcal{V}$. Hence, it is appropriate to consider the motion of the agent on the product state space $S_p=S\times 2^{\mathcal{V}}$ which is often used in the theory of model checking~\cite{product}. The agent's extended state $(s,\overline{\mathcal{V}})\in S_p$ encodes its current location $s \in S$ and the remaining set of states $\overline{\mathcal{V}}\subseteq \mathcal{V}$ to be visited. Hence, a transition in the product space~$S_p$ denotes a change in the state $s\in S$ of the agent and a change in the remaining set of states $\overline{\mathcal{V}}\subseteq\mathcal{V}$ to be visited. The transition probabilities $\mathcal{T}_p:S_p\times A\times S_p\to[0,1]$ are defined as

\begin{multline}
        \mathcal{T}_p\left((s_1, \overline{\mathcal{V}}_1), a, (s_2, \overline{\mathcal{V}}_2)\right) \\ = 
    \begin{cases}
    \mathcal{T}(s_1, a, s_2) & \textnormal{if} \quad \overline{\mathcal{V}}_2 = \overline{\mathcal{V}}_1 \setminus \{s_2\},\\
    0 & \textnormal{otherwise},
    \end{cases} 
    \\ \textnormal{for all} \quad 
    (s_1, \overline{\mathcal{V}}_1), (s_2, \overline{\mathcal{V}}_2) \in S_p, a \in A.
    \label{trans_product}
\end{multline}
The key takeaway from~\eqref{trans_product} is that the agent can never shrink the remaining set of states to be visited by more than one in a single transition. Then, a stationary deterministic policy $\pi$ on the product MDP $\mathcal{M}_p = (S_p, A, \mathcal{T}_p)$ is a mapping from the product space to the action space given by $\pi : S_p \to A$. In our problem, the objective is to visit all states in $\mathcal{V}$. Thus, the target set is $T_p = \{S~\times ~\{\emptyset\}\}$ and we wish to compute a policy that leads the agent to any state $t_p \in T_p$ in minimal expected time.

Using~\eqref{ct} and~\cite{cover_finite}, given a set $\overline{\mathcal{V}} \subseteq S$ to be covered, the expected cover time $\mathbb{E}^{\pi}\left[C_{s}^{\overline{\mathcal{V}}}\right]$ when the agent starts from state $s \in S$ with policy $\pi$ is given by
\begin{multline}
\mathbb{E}^{\pi}\left[C_{s}^{\overline{\mathcal{V}}}\right] = 1 + \\ \sum_{s' \in S} \mathcal{T}_p\left((s,\overline{\mathcal{V}}),\pi(s, \overline{\mathcal{V}}),(s',\overline{\mathcal{V}}\setminus \{s'\})\right)\mathbb{E}^{\pi}\left[C_{s'}^{\overline{\mathcal{V}} \setminus \{s'\}}\right] \\ \textnormal{for all} \ (s, \overline{\mathcal{V}})\in S_p.
    \label{ct_eq}
\end{multline}
Since we consider only states $\left(s', \overline{\mathcal{V}} \setminus \{s'\}\right)$ in~\eqref{ct_eq}, all the zero probabilities mentioned in the second condition of~\eqref{trans_product} are not included in~\eqref{ct_eq}. Based on~\eqref{ct_eq}, we interpret one unit of time as the cost incurred for the immediate one-step state transition and $\mathbb{E}^{\pi}\left[C_{s'}^{\overline{\mathcal{V}} \setminus \{s'\}}\right]$ as the future expected cost for each possible state $s' \in S$ until the agent covers the set $\overline{\mathcal{V}}$. If $\overline{\mathcal{V}} = \emptyset \ \textnormal{or} \ {\overline{\mathcal{V}} = \{s\}}$, then the agent has already covered the required set and cover time is zero:~$
{\mathbb{E}^{\pi}\left[C_{s}^{\{s\}}\right]=\mathbb{E}^{\pi}\left[C_{s}^{\{\emptyset\}}\right]=0} \ \textnormal{for all} \ s\in S$. By Assumption~\ref{ass1}, there exists a policy $\pi' : S \to A$ which generates an irreducible Markov chain. We can define a policy $\pi:S_p \to A$ as $\pi(s,\overline{\mathcal{V}}) = \pi'(s)$ for all $(s,\overline{\mathcal{V}})\in S_p$ so that the expected cover times $\mathbb{E}^{\pi}\left[C_{s}^{\overline{\mathcal{V}}}\right]$ are finite for all $(s,\overline{\mathcal{V}})\in S_p$. 

Since we introduced the MDP problem in Section~\ref{vf_sec} as maximizing the long term expected reward, we multiply~\eqref{ct_eq} by $-1$ to convert cost into reward. We interpret value function~\eqref{T_pi} as negative of the expected cover time. The reward function $R_p:S_p\to\mathbb{R}$ is
\begin{equation}
R_p(s, \overline{\mathcal{V}}) = 
\begin{cases}
0 & \textnormal{if} \ \overline{\mathcal{V}} \in \{\emptyset, \{s\}\}, \\
-1 & \textnormal{otherwise}.
\end{cases}
    \label{reward}
\end{equation}
The reward function described in Section~\ref{vf_sec} for the product space $S_p$ is $R:S_p\times A\times S_p\to\mathbb{R}$, but the reward function in~\eqref{reward} is $R_p:S_p\to\mathbb{R}$. All notions introduced in Section~\ref{sec:prel} can be analogously stated for the reward function in~\eqref{reward} as well. From~\eqref{ct_eq} and~\eqref{reward}, the recursive value function equation for a given policy $\pi$ is
\begin{multline}
V^{\pi}(s,\overline{\mathcal{V}}) = -1 + \\ 
\sum_{s' \in S} \mathcal{T}_p\left((s,\overline{\mathcal{V}}),\pi(s, \overline{\mathcal{V}}),(s',\overline{\mathcal{V}}\setminus \{s'\})\right)V^{\pi}(s', \overline{\mathcal{V}}\setminus \{s'\}) \\ \textnormal{for all} \ (s, \overline{\mathcal{V}})\in S_p,
    \label{vf_eq}
\end{multline}
where $V^{\pi}(s,\overline{\mathcal{V}})=-\mathbb{E}^{\pi}\left[C_{s}^{\overline{\mathcal{V}}}\right]$ is the value function. Therefore, problem~\eqref{prob} produces the same solution as ${\pi^*=\argmax_{\pi}\left(V^{\pi}(s_0, \mathcal{V})\right)}$. Analogously, we can interpret expected cover time~\eqref{ct_eq} as the expected cost and solve the optimization problem~\eqref{prob} to minimize the expected cost. 

Once the agent reaches a target state $t_p~\in~T_p$, from~\eqref{trans_product}, it does not transition anymore, and from~\eqref{reward}, the cost is~0.  Comparing~\eqref{T_pi} and~\eqref{vf_eq}, $\gamma = 1$ in our work, which matches the SSP problem defined in Section~2.1 of~\cite{bertsekas}. Therefore, Problem~\ref{probl_ct} is an extended version of the SSP problem on the product space $S_p$, with a target set $T_p$ instead of one target state. 

Extending the problem from a single agent to a team, we now consider multiple agents operating on the MDP~$\mathcal{M}$, all starting from the same state $s_0 \in S$. In this work, we assume that all the agents follow the same transition dynamics $\mathcal{T}$.   We first define a partition of multiple targets, and then introduce the problem of optimal target assignment to multiple agents before the start of the mission.

\begin{defn}
Let there be $m$ agents operating in an MDP~$\mathcal{M}$ and $\mathcal{V}$ be the set of target states to be covered. Then, $\mathcal{P}~=~\{P_1,P_2,\ldots,P_m\}$ is a partition of the target states~$\mathcal{V}$ for $m$ agents such that $\cup_{i=1}^m P_i = \mathcal{V}, \ P_i \cap P_j = \emptyset$ for all $i,j\in\{1,2,\ldots,m\}, \ i \neq j$.
\label{part_defn}
\end{defn}

For a fixed partition $\mathcal{P}$, the optimal expected time for $m$ agents to jointly visit the target states $\mathcal{V}$ is
\begin{equation}
\min_{\{\pi_1, \pi_2, \ldots, \pi_m\}}\mathbb{E}\left[\max_{P_i} \left(C_{s_0}^{P_i, \pi_i}\right)\right],
    \label{orig_multi_ct}
\end{equation}
where the policy for agent $i\in\{1,2,\ldots,m\}$ is $\pi_i$. Solving the problem in~\eqref{orig_multi_ct} would require us to compute the expectation of the maximum of random variables~$C_{s_0}^{P_i, \pi_i}$ and then jointly optimize over the policies $\{\pi_1, \pi_2, \ldots, \pi_m\}$. Since it is hard to jointly optimize over the expected value of the maximum of random variables~\cite{Emax}, we assume that each agent~$i$ uses the policy $\pi_i^*$ from Problem~\ref{probl_ct} that minimizes the expected cover time for targets $P_i$. Hence, we implicitly make the approximation
\begin{equation}
\min_{\{\pi_1, \pi_2, \ldots, \pi_m\}}\mathbb{E}\left[\max_{P_i} \left(C_{s_0}^{P_i, \pi_i}\right)\right] \approx \max_{P_i} \left(\min_{\pi_i}\mathbb{E}\left[C_{s_0}^{P_i, \pi_i}\right]\right)
    \label{multi_ct_ass}
\end{equation}
by fixing the policy $\pi_i^* = \argmin_{\pi}\mathbb{E}^{\pi}\left[C^{P_i}_{s_0}\right]$ for each agent~$i$ so that we can take the maximum out of the expectation as given in~\eqref{multi_ct_ass} and then solve for the partition~$\mathcal{P}$. Since we interchanged the order of minimizing over the policies and maximizing over the target sets $P_i$,~\eqref{multi_ct_ass} is not an equality in general and establishing the relationship between the two values remains a topic for future work.

The formal problem statement of allocating targets to multiple agents is given below after making the approximation in~\eqref{multi_ct_ass}.
\begin{prob}
\label{probl_multi_ct}
Let there be $m$ agents operating in an MDP $\mathcal{M}$. Let $s_0\in S$ be the initial state of the $m$ agents, and $\mathcal{V}$ be the set of target states to be covered. Let $\mathcal{P}$ be a partition of the target states $\mathcal{V}$ for $m$ agents. Let $\pi_i^*$ and $\mathbb{E}^{\pi_i^*}\left[C_{s_0}^{P_i}\right]$ be the optimal policy and optimal expected cover time for agent $i\in\{1,2,\ldots,m\}$, respectively. Find a partition $\mathcal{P}^*$ that solves the optimization problem
\begin{equation}
\mathcal{P}^* = \argmin_{\mathcal{P}} \max_{i} \left(\mathbb{E}^{\pi_i^*}\left[C^{P_i}_{s_0}\right]\right).
    \label{prob_multi}
\end{equation}
\end{prob}

In subsequent sections, we present the solution methodologies that approximately minimize the expected cover time.

\section{Optimal Policy for Single Agent}
\label{sec:exact_sol_sec}

The objective of Problem~\ref{probl_ct} is to find a policy that results in minimal expected cover time for a single agent. In this section, we adopt \textit{policy iteration}~\cite{RL_intro} to solve for $\pi^*$ of the SSP problem we discussed in Section~\ref{sec:probl}. We describe the optimal policy iteration procedure below, which also serves as the motivation for our heuristic method. 

It is shown in~\cite{SSP_opt} that \textit{policy iteration} --- when starting with a proper policy --- can be adopted for SSP problems. Assumption~\ref{ass1} guarantees the existence of a stationary policy such that the agent can reach $s'$ starting from $s$ in a finite number of steps with non-zero probability, for every pair of states $s, s' \in S$. Hence, we can use Lemma~3 in~\cite{proper_policy} to start with a proper policy that chooses an action uniformly at random for all $(s, \overline{\mathcal{V}}) \in S_p$ where $\overline{\mathcal{V}} \neq {\emptyset}$.

The first step of policy iteration is to recursively compute the value function for the policy $\pi^*(s,\overline{\mathcal{V}})$ using~\eqref{vf_eq}. Since the policy is proper, as shown in Proposition 1.1(a) of~\cite{bertsekas}, recursively computing the value function using~\eqref{vf_eq} converges to the actual value function for policy $\pi$. Then, the new policy is computed using~\eqref{policy_optimal} where $\gamma=1$ for SSP problems. The two steps of value function computation and policy update are repeated until the old policy and new policy are exactly same.

\begin{prop}
Under Assumption~\ref{ass1} and starting from a proper policy, policy iteration on the product MDP $\mathcal{M}_p$ with reward function~\eqref{reward} returns an optimal stationary deterministic policy $\pi^*:S_p \to A$ for Problem~\ref{probl_ct}. 
\label{theorem_SSP}
\end{prop}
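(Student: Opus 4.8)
The plan is to verify that Problem~\ref{probl_ct}, posed on the product MDP $\mathcal{M}_p$ with reward~\eqref{reward}, meets the two standard hypotheses under which policy iteration for stochastic shortest path problems is guaranteed to converge, as established in~\cite{SSP_opt} and Proposition~1.1 of~\cite{bertsekas}: (i) at least one proper policy exists, and (ii) every improper stationary policy yields infinite expected cost (equivalently, $V^{\pi}(s,\overline{\mathcal{V}}) = -\infty$) at some product state. Once these hold, the cited convergence result applies almost verbatim, so the bulk of the argument is the checking of (i) and (ii) together with the reduction of our multi-target problem to the canonical single-target SSP form.

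First I would confirm that $\mathcal{M}_p$ is a genuine SSP instance. By the transition structure~\eqref{trans_product}, once the agent enters a state $(s,\emptyset)\in T_p$ the remaining set can no longer change, so every state of $T_p$ is absorbing; by~\eqref{reward} each such state is cost-free, whereas every other product state incurs unit cost. Since all states of $T_p$ are absorbing and cost-free, they can be aggregated into a single target state, placing the problem in the exact setting of Section~2.1 of~\cite{bertsekas}.

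Next I would establish (i). By Assumption~\ref{ass1} there is a policy $\pi'\colon S \to A$ inducing an irreducible Markov chain; lifting it to $\mathcal{M}_p$ via $\pi(s,\overline{\mathcal{V}}) = \pi'(s)$ makes the expected cover times $\mathbb{E}^{\pi}[C_s^{\overline{\mathcal{V}}}]$ finite for every $(s,\overline{\mathcal{V}})\in S_p$, exactly as argued in the text preceding this statement, so $\pi$ is proper. For (ii), suppose a stationary policy $\pi$ is improper. Then there is some $(s,\overline{\mathcal{V}})$ from which $T_p$ is reached with probability strictly less than one; since the remaining-target coordinate $\overline{\mathcal{V}}$ can only shrink along any path by~\eqref{trans_product}, the only failure mode is that the agent never covers some target in $\overline{\mathcal{V}}$. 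That event having positive probability forces $\mathbb{E}^{\pi}[C_s^{\overline{\mathcal{V}}}] = +\infty$, hence $V^{\pi}(s,\overline{\mathcal{V}}) = -\infty$. Thus no improper policy can be optimal or returned by the iteration once it is initialized properly.

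Finally, with (i) and (ii) verified, I invoke the SSP policy-iteration theorem. Starting from the proper policy supplied by Lemma~3 of~\cite{proper_policy}, each evaluation step solves~\eqref{vf_eq} (which converges to the true $V^{\pi}$ by Proposition~1.1(a) of~\cite{bertsekas} because the current policy is proper), and each improvement step applies the greedy update~\eqref{policy_optimal} with $\gamma = 1$; the cited results guarantee that every iterate is again proper, that the value functions are monotonically non-decreasing, and that---since $S_p$ and $A$ are finite, so only finitely many deterministic policies exist---the process terminates. At termination the policy is unchanged, so its value satisfies the Bellman optimality equation (cf.~\eqref{T}) and is therefore optimal by the characterization in Section~\ref{vf_sec}, while determinism is immediate from the $\argmax$ in~\eqref{policy_optimal}. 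The step demanding the most care is (ii): one must argue cleanly that on the product space the only way to be improper is to leave some target forever uncovered, and that this indeed diverges the expected cover time.
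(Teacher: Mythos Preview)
Your proof is correct and takes essentially the same approach as the paper: both reduce Problem~\ref{probl_ct} to a standard SSP on the product MDP $\mathcal{M}_p$ and invoke the policy-iteration convergence results from~\cite{bertsekas,SSP_opt}, terminating by finiteness of the set of deterministic policies. Your treatment is somewhat more thorough in that you explicitly verify both SSP hypotheses---in particular that every improper policy incurs infinite expected cost---whereas the paper's proof relies on the surrounding discussion for this and cites Proposition~3.5 of~\cite{bertsekas} directly for strict improvement and termination.
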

\begin{proof}
We showed in Section~\ref{sec:probl} that Problem~\ref{probl_ct} is a SSP problem on the product MDP $\mathcal{M}_p$ with multiple target states. In this section, we described the policy iteration procedure to solve for the optimal policy of a SSP problem. As shown in Proposition~3.5 of~\cite{bertsekas}, the new updated policy is strictly better if the old policy is not optimal. Since the number of proper deterministic policies is finite, policy iteration on the MDP $\mathcal{M}_p$ with reward function~\eqref{reward} always returns the optimal policy to reach a target state with $\overline{\mathcal{V}} \in \{\emptyset, \{s\}\}$.
\end{proof}

Though Proposition~\ref{theorem_SSP} returns the optimal policy, finding it comes at a high computational cost as we will show that Problem~\ref{probl_ct} is NP-complete by reducing it to a Hamiltonian path problem~\cite{tsp_np}. A graph is a deterministic MDP, where the transition probability function satisfies ${\mathcal{T}(s,a,s')\in\{0,1\} \ \textnormal{for all} \ s,s'\in S, \ a\in A}$. A Hamiltonian path~\cite{tsp_np} visits all the states of a graph exactly once. If a Hamiltonian path exists for a graph and the set of targets is the state space $S$, then Problem~\ref{probl_ct} seeks to find a Hamiltonian path on the graph. However, the problem of determining whether a Hamiltonian path exists for a graph is NP-complete~\cite{tsp_np} in the number of states. Hence, Problem~\ref{probl_ct} is at least as hard as minimizing the cover time on a graph. Since the value function is computed for all $(s, \overline{\mathcal{V}}) \in S_p$ in the policy iteration procedure, it consumes $O\left(|S|^2\left(2^{|\mathcal{V}|}\right)\right)$ number of operations which is exponential time. Since we discussed using the Hamiltonian path problem that Problem~\ref{probl_ct} is at least NP-complete, the policy iteration procedure indeed achieves exponential time complexity for Problem~\ref{probl_ct}.

In the next section, we propose a method with polynomial time complexity at each time step.

\section{Suboptimal Value Iteration for Single agent}
\label{sec:sub_opt_single}

In this section, we present a heuristic method for Problem~\ref{probl_ct} by adopting an approximate value function. We prove that our procedure optimally reaches an unvisited target state at every time step and also prove the optimality of our algorithm on a small class of deterministic MDPs.

The intuition behind our procedure is to, at every time $t$, compute a policy that drives the agent to an unvisited state in $\mathcal{V}$ in minimum expected time. We use the computational procedure in policy iteration as a motivation to compute an approximate value function based on value iteration. Let, at time $t$, the remaining set of states to be visited be $\overline{\mathcal{V}}_t$. We propose a reward function $\widehat{R}_t:S\to\mathbb{R}$ by
\begin{equation}
\widehat{R}_t(s) = 
\begin{cases}
-|\overline{\mathcal{V}}_t| & \textnormal{if} \ s \notin \overline{\mathcal{V}}_t, \\
-|\overline{\mathcal{V}}_t|+1 & \textnormal{if} \ s \in \overline{\mathcal{V}}_t.
\end{cases}
\label{reward_new}
\end{equation}
At every time $t$, we aim to find a policy that yields maximum long-term expected reward using~\eqref{reward_new} for the MDP $\mathcal{M}$. The optimal value function $\widehat{V}^*_t(s)$ for such a problem is
\begin{equation}
\widehat{V}^*_t(s) = 
\max_{\widehat{\pi}_t}\mathbb{E}^{\widehat{\pi}_t}\left[\sum_{k=0}^{\infty}\gamma^k \widehat{R}_t(s_k) \bigg|s_0 = s\right],
    \label{value_hat}
\end{equation}
where $\widehat{\pi}_t : S \to A$ is the deterministic stationary policy generated at time~$t$. Let the optimal policy which maximizes~\eqref{value_hat} be $\widehat{\pi}^*_t$.
Then, using Bellman's optimality principle~\cite{bertsekas}, $\widehat{V}^*_t(s)$ and $\widehat{\pi}_t^*(s)$ can be computed as
\begin{multline}
     \widehat{V}^*_t(s) = \max_{a \in A} \sum_{s' \in S}\mathcal{T}(s,a,s')\Big(\widehat{R}_t(s') + \gamma \widehat{V}^*_t(s')\Big), \\ \textnormal{for all} \ s \in S,
    \label{value_new}
\end{multline}
\begin{multline}
    \widehat{\pi}_t^*(s) = \argmax_{a \in A} \sum_{s' \in S}\mathcal{T}(s,a,s')\Big(\widehat{R}_t(s') +  \gamma \widehat{V}^*_t(s')\Big), \\ \textnormal{for all} \ s \in S.
    \label{policy_new}
\end{multline}
The discount factor is $\gamma \in [0,1)$. Note that we used a discount factor of $\gamma=1$ in Section~\ref{sec:exact_sol_sec} to solve for the optimal policy since it was a SSP problem on the product MDP~$\mathcal{M}_p$. We can't use a discount factor of $\gamma=1$ in this section because the infinite sum in~\eqref{value_hat} will have a finite value only if $\gamma\in[0,1)$. Let at time $t$ the state of the agent be $s_t\in S$. If $\gamma=0$,~\eqref{policy_new} returns an action that transitions the agent to a state in $\overline{\mathcal{V}}_t$ in exactly one transition with maximal probability. If there is no such one-step transition, i.e., $\mathcal{T}(s_t,a,s') = 0$ for all ${s'\in\overline{\mathcal{V}}_t, a\in A}$ when $\gamma=0$, then $\widehat{\pi}^*_t(s_t)$ has multiple solutions. We use the convention that if $\widehat{\pi}^*_t(s_t)$ is not unique, then the agent chooses one of the actions selected at random since all such actions return the optimal value function $\widehat{V}^*_t(s)$ for $\gamma=0$. Thus, if $\gamma=0$ and the MDP is a graph, then~\eqref{policy_new} is exactly the \textit{nearest neighbor} heuristic~\cite{heur_TSP}. However, our heuristic presented in this section aims to find a path of minimal length to an unvisited state. As we increase~$\gamma$, the emphasis on rewards of future time steps also increases.

From~\eqref{value_hat} and~\eqref{reward_new}, we note that the approximate value function $\widehat{V}_t^*(s)$ changes if and only if $\overline{\mathcal{V}}_t$ changes. Therefore, the benefit of~\eqref{value_new} is that the value function depends only on the states $s \in S$ and we recompute $\widehat{V}_t^*(s)$  only when the agent visits an unvisited state $s \in \overline{\mathcal{V}}_t$. Unlike the policy iteration procedure described in Section~\ref{sec:exact_sol_sec}, where we need to compute the value function for all states $S$ and all subsets of $\mathcal{V}$, our new proposed procedure computes the value function only for all states $S$. 

We present our procedure in Algorithm~\ref{alg_heur_ct}. The value function $\widehat{V}_t(s)$ is computed for all states in lines 6--12 using value iteration and~\eqref{value_new}. The policy is computed in line~14 as given in~\eqref{policy_new}. Lines 14--17 are repeated until the agent visits a state in $\overline{\mathcal{V}}_t$. Once the agent visits a state in $\overline{\mathcal{V}}_t$, the set of states to be covered is updated and denoted by $\overline{\mathcal{V}}_{t+1}$. Then, the new value function is computed for $\overline{\mathcal{V}}_{t+1}$. The iterative procedure repeats until the agent has visited all the states in $\mathcal{V}$.

\LinesNumbered
\begin{algorithm}[!t]
\caption{Suboptimal Value Iteration}
\label{alg_heur_ct}
\SetAlgoLined
\textbf{Input}: $S, A, \mathcal{T}, \mathcal{V}, \varepsilon, s_0, \gamma$\;
\textbf{Initialize:} $t=0, \overline{\mathcal{V}}_0 = \mathcal{V} \setminus \{s_0\}$\;
$\widehat{V}_t(s) = - \infty \ \textnormal{for all} \ s \in S$\;
\While{$\overline{\mathcal{V}}_t \neq \emptyset$}{
$\Delta = \infty$\;
\While{$\Delta \geq \varepsilon$}{
  \ForAll{$s \in S$}
    {
    $\widehat{V}_t'(s) \leftarrow \widehat{V}_t(s)$\;
     $\widehat{V}_t(s) \leftarrow \max_{a \in A}\sum_{s' \in S}\mathcal{T}(s,a,s')\Big(\widehat{R}_t(s') + \gamma \widehat{V}_t(s')\Big)$\;
     }
  $\Delta \leftarrow \max_{s}|\widehat{V}_t(s) - \widehat{V}'_t(s)|$
}
\Repeat{$|\overline{\mathcal{V}}_t| = |\overline{\mathcal{V}}_{t-1}|-1$}{
$\widehat{\pi}^*_t(s_t)\leftarrow \argmax_{a \in A} \sum_{s' \in S}\mathcal{T}(s_t,a,s')\Big(\widehat{R}_t(s') +  \gamma \widehat{V}_t(s')\Big)$\;
Implement action $\widehat{\pi}^*_t(s_t)$\;
$t=t+1$\;
Update set to be covered: $\overline{\mathcal{V}}_t = \overline{\mathcal{V}}_{t-1} \setminus \{s_t\}$\;
}
}
 \end{algorithm}

Now, we seek to prove that Algorithm~\ref{alg_heur_ct} will terminate in finite time. We use the following claim.

\begin{prop}
Under Assumption~\ref{ass1}, there exists some $\overline{\varepsilon} > 0$ such that for all $\varepsilon\in(0, \overline{\varepsilon})$ and any $\gamma \in (0,1)$, at every time $t\geq0$, Algorithm~\ref{alg_heur_ct} generates a policy that drives the agent starting from state $s_t \in S$ to eventually reach some state $s \in \overline{\mathcal{V}}_t$ with probability 1, and reach it with minimal expected time. 
\label{prop_1}        
\end{prop}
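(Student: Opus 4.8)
The plan is to split the statement into two independent claims about the stationary policy $\widehat{\pi}^*_t$ produced at time $t$: first that it reaches $\overline{\mathcal{V}}_t$ with probability $1$, and second that it does so in minimal expected time. Before either, I would pin down the role of $\overline{\varepsilon}$. Since $\gamma\in(0,1)$, the Bellman map behind~\eqref{value_new} is a $\gamma$-contraction in the sup norm, so the inner loop (lines~6--12) converges geometrically to the true $\widehat{V}^*_t$ of~\eqref{value_hat}. Because $\mathcal{M}$ admits only finitely many deterministic stationary policies, the greedy rule~\eqref{policy_new} has a strictly positive optimality gap away from ties; taking $\overline{\varepsilon}$ below that gap guarantees that for every $\varepsilon\in(0,\overline{\varepsilon})$ the action extracted in line~14 is exactly $\widehat{\pi}^*_t$. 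This reduces the proposition to a statement about $\widehat{\pi}^*_t$ itself.

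For the probability-$1$ claim I would show that $\widehat{\pi}^*_t$ keeps $\overline{\mathcal{V}}_t$ reachable from every state. By Assumption~\ref{ass1} some policy induces an irreducible chain, so $\overline{\mathcal{V}}_t$ is reachable from every $s\in S$ in $\mathcal{M}$; consequently $\widehat{V}^*_t(s)$ strictly exceeds the ``give-up'' value $-|\overline{\mathcal{V}}_t|/(1-\gamma)$ attained by a policy that never visits $\overline{\mathcal{V}}_t$. If the chain induced by $\widehat{\pi}^*_t$ had a recurrent class disjoint from $\overline{\mathcal{V}}_t$ and reachable from $s_t$, then at some state of that class the Bellman equation~\eqref{value_new} would be violated, since Assumption~\ref{ass1} supplies an action with strictly larger continuation value pointing toward $\overline{\mathcal{V}}_t$. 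Hence every state reachable under $\widehat{\pi}^*_t$ can still reach $\overline{\mathcal{V}}_t$ with positive probability, and the standard fact that a subset reachable from all states of a finite Markov chain is hit with probability $1$ closes this part.

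For the minimal-expected-time claim I would route through the SSP formulation already developed in the paper: make every state of $\overline{\mathcal{V}}_t$ absorbing and cost-free and charge unit cost to all other transitions. By the SSP theory invoked in Section~\ref{sec:probl}, this problem has an optimal proper policy $\pi^{\mathrm{SSP}}$ whose cost-to-go equals the minimal expected hitting time of $\overline{\mathcal{V}}_t$. It then suffices to show $\widehat{\pi}^*_t$ and $\pi^{\mathrm{SSP}}$ agree. I would write the discounted value off $\overline{\mathcal{V}}_t$ by first-passage decomposition, $\widehat{V}^*_t(s) = -\tfrac{|\overline{\mathcal{V}}_t|}{1-\gamma} + \mathbb{E}^{\widehat{\pi}^*_t}\!\big[\gamma^{\tau}\,W(s_\tau)\big]$, with $\tau$ the hitting time and $W(\cdot)$ the optimal continuation value on $\overline{\mathcal{V}}_t$, and compare the policy ordering induced by this objective with the ordering induced by $\mathbb{E}[\tau]$.

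The step I expect to be the main obstacle is precisely this last equivalence between maximizing a discounted reward and minimizing an expected hitting time, because the two objectives are \emph{not} interchangeable for every discount factor. As the discussion preceding the proposition observes, at $\gamma\to0$ the rule~\eqref{policy_new} degenerates to the greedy nearest-neighbor heuristic, which need not minimize expected time on a stochastic $\mathcal{M}$; more generally, maximizing $\mathbb{E}[\gamma^{\tau}]$ weights the whole \emph{distribution} of $\tau$ rather than its mean, so a higher-variance policy can be favored when $\gamma$ is small. I would therefore resolve the equivalence by a vanishing-discount argument: expand $\mathbb{E}[\gamma^{\tau}] = 1 + (\ln\gamma)\,\mathbb{E}[\tau] + o(\ln\gamma)$ as $\gamma\to1^-$ and exploit finiteness of the deterministic policy set to obtain a threshold above which the discount-optimal and hitting-time-optimal orderings coincide. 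The delicate points are bounding the higher moments of $\tau$ uniformly over policies so that the first-order term dominates, and reconciling the order of quantifiers — a single $\overline{\varepsilon}$ serving \emph{all} $\gamma\in(0,1)$ even as the optimality gap shrinks near $\gamma=1$ — with the range asserted in the statement.
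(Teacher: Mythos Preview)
Your decomposition and the $\overline{\varepsilon}$ argument match the paper's: the paper also invokes convergence of value iteration and then cites~\cite{bertsekas} for the existence of a threshold below which the greedy policy is exactly $\widehat{\pi}^*_t$. Where you diverge is in the second half. The paper does \emph{not} route through the SSP reformulation or a first-passage decomposition; it simply observes that the reward~\eqref{reward_new} assigns strictly larger value to states in $\overline{\mathcal{V}}_t$ than to states outside, and asserts directly that the discount-optimal policy therefore reaches $\overline{\mathcal{V}}_t$ with probability~$1$ and in minimal expected time. There is no vanishing-discount analysis, no moment bound on $\tau$, and no treatment of the quantifier tension you raise.

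Your scrutiny of the discount-versus-hitting-time equivalence is warranted and is, in fact, more careful than the paper's argument. You are right that maximizing $\mathbb{E}[\gamma^{\tau}]$ (or, more precisely here, maximizing the discounted occupation of $\overline{\mathcal{V}}_t$, since the targets are \emph{not} made absorbing in the value iteration of Algorithm~\ref{alg_heur_ct}) does not in general coincide with minimizing $\mathbb{E}[\tau]$ for every $\gamma\in(0,1)$: the post-hit dynamics can influence the discount-optimal choice but are irrelevant to the hitting time. The paper's proof does not engage with this; it treats the implication ``higher reward on $\overline{\mathcal{V}}_t$ $\Rightarrow$ minimal expected hitting time'' as immediate. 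So your proposal is not wrong so much as more ambitious: you are attempting to justify rigorously a step the paper takes for granted, and the obstacle you flag --- that the asserted uniformity in $\gamma$ over all of $(0,1)$ is in tension with the vanishing-discount route --- is a genuine one that the paper's brief proof does not resolve.
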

\begin{proof}
In Algorithm~\ref{alg_heur_ct}, we use a stopping criterion $\varepsilon$ which returns an estimate $\widehat{V}_t(s)$ of the optimal value function $\widehat{V}_t^*(s)$ that solves $T(\widehat{V}_t^*(s)) = \widehat{V}_t^*(s)$. The value function computed based on value iteration in lines~7--11 uses the optimal Bellman operator $T$ as described in Section~\ref{vf_sec}. Since the operator $T$ is monotone~\cite{puterman}, the value function computed using value iteration converges to the optimal value function. Thus, as shown on page~27 of~\cite{bertsekas}, there exists a small enough~$\overline{\varepsilon}>0$ such that for all $\epsilon\in(0,\overline{\varepsilon})$, the policy computed in line~14 is optimal after a finite number of iterations of value iteration present in lines~7-11 of Algorithm~1. The computed policy is  optimal with respect to the reward function defined in~\eqref{reward_new}. From~\eqref{reward_new}, at every time $t$, all states $s\in\overline{\mathcal{V}}_t$ have the same reward $-|\overline{\mathcal{V}}_t| + 1$, which is higher than the reward $-|\overline{\mathcal{V}}_t|$ assigned for the states $s\in S\setminus\overline{\mathcal{V}}_t$. Hence, the optimal policy computed in line~14 will eventually drive the agent to some state $s\in\overline{\mathcal{V}}_t$ with probability 1, in minimal expected time.
\end{proof}

\begin{cor}
Under Assumption~\ref{ass1} and any $\gamma \in (0,1)$, Algorithm~\ref{alg_heur_ct} will eventually cover the given set of states $\mathcal{V}$ with probability 1.
\label{alg_heur_ct_th}
\end{cor}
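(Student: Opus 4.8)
The plan is to deduce the corollary from Proposition~\ref{prop_1} by induction on the number of target states that remain to be visited. The starting observation is structural: the outer \textbf{while} loop of Algorithm~\ref{alg_heur_ct} exits exactly when $\overline{\mathcal{V}}_t = \emptyset$, and by the update in line~17 together with the product transition structure~\eqref{trans_product}, the set $\overline{\mathcal{V}}_t$ is monotonically non-increasing in $t$ and shrinks by \emph{exactly one} element each time the agent lands on a previously unvisited target. Consequently, it is enough to show that, with probability~$1$, the quantity $|\overline{\mathcal{V}}_t|$ reaches $0$ after finitely many steps; at that moment every state of $\mathcal{V}$ has been visited.

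For the inductive argument, I would partition the sample path into \emph{phases}, where a phase is the maximal block of time steps during which $\overline{\mathcal{V}}_t$ holds a fixed value --- that is, a single execution of the inner \textbf{Repeat}--\textbf{Until} loop, which terminates precisely when $|\overline{\mathcal{V}}_t| = |\overline{\mathcal{V}}_{t-1}| - 1$. Let $\tau_k$ denote the random time at which the $k$-th new target is visited, with $\tau_0 = 0$. Fixing $\varepsilon \in (0,\overline{\varepsilon})$ and $\gamma \in (0,1)$ as required by Proposition~\ref{prop_1}, that proposition guarantees that, conditioned on the agent's state and remaining set at time $\tau_{k}$, the policy recomputed by the algorithm drives the agent to some state of $\overline{\mathcal{V}}_{\tau_k}$ with probability~$1$; hence $\tau_{k+1}$ is finite almost surely and $|\overline{\mathcal{V}}_{\tau_{k+1}}| = |\overline{\mathcal{V}}_{\tau_k}| - 1$. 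Since $\overline{\mathcal{V}}_0 = \mathcal{V}\setminus\{s_0\}$ is finite, at most $|\mathcal{V}|$ such phases can occur before $\overline{\mathcal{V}}_t$ empties, so the recursion terminates after finitely many phases.

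The one point that needs care --- and what I would treat as the main (if modest) obstacle --- is assembling the per-phase almost-sure guarantees of Proposition~\ref{prop_1} into a single almost-sure statement about the whole run. Here I would invoke the strong Markov property at each hitting time $\tau_k$ to restart Proposition~\ref{prop_1} afresh for the new remaining set $\overline{\mathcal{V}}_{\tau_k}$, and then use that a finite intersection of probability-$1$ events again has probability~$1$: since there are at most $|\mathcal{V}|$ phases, the event ``every phase terminates in finite time'' is a finite intersection of almost-sure events and therefore itself occurs with probability~$1$. This yields that the cover time of $\mathcal{V}$ starting from $s_0$ is finite with probability~$1$, i.e., Algorithm~\ref{alg_heur_ct} covers $\mathcal{V}$ with probability~$1$, which is the claim.
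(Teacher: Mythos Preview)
Your proposal is correct and follows essentially the same approach as the paper: invoke Proposition~\ref{prop_1} to conclude that each phase of the inner loop almost surely terminates with $|\overline{\mathcal{V}}_t|$ decremented by one, and then observe that finitely many such decrements exhaust $\mathcal{V}$. The paper's proof is a one-line compression of exactly this argument, whereas you spell out the phase decomposition and the finite-intersection step more carefully.
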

\begin{proof}
As shown in Proposition~\ref{prop_1}, the agent reaches a state in $\overline{\mathcal{V}}_t$ in minimal expected time, i.e., $|\overline{\mathcal{V}}_{t'}|=|\overline{\mathcal{V}}_{t}| - 1$ for some finite time $t'>t$, eventually leading to $|\overline{\mathcal{V}}_t|=0$. 
\end{proof}

The value function update in line~9 of Algorithm~\ref{alg_heur_ct} has time complexity $O(|S||A|)$. Lines~7-11 iterates over all possible states and hence it requires $O(|S|^2|A|)$ operations. Then, the time complexity to compute the policy in line~14 is $O(|S||A|)$. Therefore, in a single time step, Algorithm~\ref{alg_heur_ct} consumes only $O(|S|^2|A| + |S||A|)$ operations, in comparison to $O\left(|S|^2\left(2^{|\mathcal{V}|}\right)\right)$ operations consumed by the optimal policy iteration procedure described in Section~\ref{sec:exact_sol_sec}.

The inherent suboptimality of Algorithm~\ref{alg_heur_ct} is a consequence of the value function $\widehat{V}_t(s)$ which does not always capture the complete information about the minimum expected cover time. However, as we will show in Section~\ref{sec:numerical} using numerical results that $\widehat{V}_t(s)$ often produces a reasonable approximation of the minimal expected cover time. Additionally, we can prove that the policy from~\eqref{policy_new} is optimal for path graphs, cycle graphs, and complete graphs. These undirected graphs are illustrated in Fig.~\ref{fig:opt_graphs}; we define them as follows. If the states (vertices) are denoted by $S=\{1, 2, \ldots, n\}$ and the \textit{terminal vertices} are $1$ and $n$, the \textit{path graph} has edges $E_p=\{\{i,i+1\}~|~i=1,2,\ldots,n-1\}$, and the \textit{cycle graph} has edges $E_p \cup \{\{1,n\}\}$, whereas the \textit{complete graph} has an edge between every pair of vertices.

\begin{figure}[!t]
     \centering
     \begin{subfigure}[h]{\linewidth}
         \centering
         \includegraphics[width=0.5\linewidth]{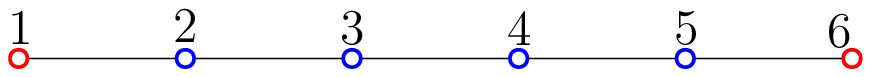}
         \caption{Path graph}
         \label{fig:path}
     \end{subfigure}\\
     \begin{subfigure}[h]{0.4\linewidth}
         \centering
         \includegraphics[width=0.5\linewidth]{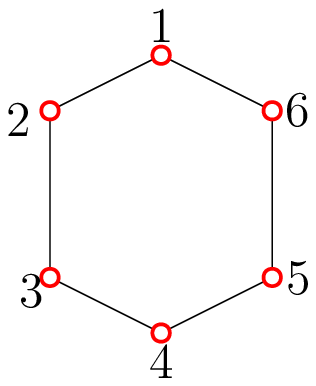}
         \caption{Cycle graph}
         \label{fig:cycle}
     \end{subfigure}
     \begin{subfigure}[h]{0.4\linewidth}
         \centering
         \includegraphics[width=0.5\linewidth]{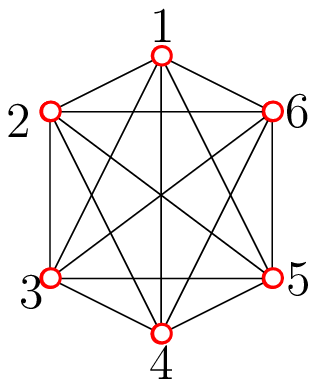}
         \caption{Complete graph}
         \label{fig:complete}
     \end{subfigure}
     \caption{An illustration of the graphs where Theorem~\ref{theorem_dir_heur} applies, with possible initial states in red.}
        \label{fig:opt_graphs}
\end{figure}

\begin{thm}
Algorithm~\ref{alg_heur_ct} always produces optimal paths with any $\gamma \in (0,1)$ for Problem \ref{probl_ct} to cover any given set of targets for the following deterministic MDPs: (i) path graphs with initial state as either of the terminal vertices, (ii) cycle graphs, and (iii) complete graphs.
\label{theorem_dir_heur}
\end{thm}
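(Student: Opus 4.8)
The plan is to reduce the theorem, for each of the three graph families, to a comparison between the cover time realized by Algorithm~\ref{alg_heur_ct} and a matching lower bound on the optimal cover time. The starting observation is that on a deterministic MDP (a graph) Proposition~\ref{prop_1} says the policy recomputed in line~14 drives the agent, in minimal expected time, to \emph{some} currently unvisited target; since transitions are deterministic this means the agent follows a shortest path to a nearest state of $\overline{\mathcal{V}}_t$ and then re-plans. Thus Algorithm~\ref{alg_heur_ct} realizes a ``nearest-unvisited-target-first'' (greedy) cover, and it suffices to show this greedy cover attains the optimal cover time on each family. I would first record the trivial lower bounds that any cover walk must reach every target, so its length is at least the graph distance from $s_0$ to the farthest target, and at least the number $|\mathcal{V}\setminus\{s_0\}|$ of targets to be visited.

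For the path graph with $s_0$ a terminal vertex the argument is immediate: the terminal vertex has degree one, so the first move is forced, and after each target visit every remaining target lies strictly on one side of the agent. Hence the greedy policy performs a monotone sweep toward the farthest target and never reverses, so its cover time equals the distance from $s_0$ to the farthest target, meeting the lower bound for every $\gamma$. For the complete graph, every pair of vertices is adjacent, so each re-planning step lands directly on a new target and covering $\mathcal{V}\setminus\{s_0\}$ takes exactly $|\mathcal{V}\setminus\{s_0\}|$ steps, again meeting the lower bound. Crucially, in the complete graph all unvisited targets are symmetric (all at distance one and all leading to a further target in one step), so the visiting order---hence the tie-breaking and the value of $\gamma$---is irrelevant to the cover time.

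The cycle is the substantive case. Here I would first characterize the optimal cover: since the set of vertices visited by any walk starting at $s_0$ is an arc of the cycle containing $s_0$, an optimal cover traverses the whole cycle except for a single maximal target-free arc that does not contain $s_0$, and its length is minimized by leaving out the \emph{largest} such arc; if the two targets bounding that arc lie at clockwise and counterclockwise distances $d_{\mathrm{CW}}$ and $d_{\mathrm{CCW}}$ from $s_0$, the optimal cover time is $d_{\mathrm{CW}}+d_{\mathrm{CCW}}+\min(d_{\mathrm{CW}},d_{\mathrm{CCW}})$. I would then try to show, by induction on $|\overline{\mathcal{V}}_t|$ together with an exchange argument on the cyclic order of the remaining targets, that the greedy move to a nearest unvisited target stays consistent with leaving out this largest empty arc, so that the greedy cover realizes the value above.

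The main obstacle lies precisely in this last step, and it is where the ``any $\gamma\in(0,1)$'' claim is most delicate. The reward~\eqref{reward_new} grants the same bonus for occupying \emph{any} unvisited target, so with $\gamma>0$ the value function favours directions that lead toward denser clusters of remaining targets, not necessarily the direction of the globally shortest cover. On the path and complete graphs this bias is harmless (the direction is forced, respectively all targets are symmetric), but on the cycle it can influence which of two nearly equidistant targets the policy commits to first, and an ill-timed commitment toward a cluster can strand a lone target on the far side and inflate the cover time. The crux is therefore to prove that this discount-induced preference never forces a suboptimal initial commitment on the cycle; I expect this to require a careful accounting of how $\gamma$ weights the two competing arcs, combined with a fixed tie-breaking convention, and it is the step I would budget the most effort for and scrutinize most carefully.
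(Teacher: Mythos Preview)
Your treatment of path graphs with a terminal start and of complete graphs is correct and essentially coincides with the paper's argument, which devotes only a sentence to each. The paper's proof of the cycle case is equally brief: it invokes Proposition~\ref{prop_1} to say the agent first moves toward the nearest unvisited target and then asserts that ``once that target is visited, the rest of the problem is like a path graph.'' Your proposal is far more careful, and the worry you isolate in your last paragraph---that the discount-induced preference among equidistant targets can force a bad initial commitment---is precisely the substance of the cycle case that the paper's one-line reduction glosses over.

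Unfortunately, the step you plan to fight hardest for cannot be completed, because the cycle claim appears to be false as stated. Take the $6$-cycle $0\!-\!1\!-\!2\!-\!3\!-\!4\!-\!5\!-\!0$ with $s_0=0$ and $\mathcal{V}=\{1,4,5\}$. Both neighbours $1$ and $5$ are targets, so Proposition~\ref{prop_1} only guarantees that the first target is hit in one step; the tie is broken by $\widehat V^*_t$. With reward~\eqref{reward_new} the optimal infinite-horizon continuation from $5$ (or $4$) is to oscillate on the target pair $\{4,5\}$, yielding $\widehat V^*_t(4)=\widehat V^*_t(5)=-2/(1-\gamma)$, whereas from $1$ every continuation first incurs a $-3$ step, so $\widehat V^*_t(1)=(-3+\gamma)/(1-\gamma)$ and $\widehat V^*_t(5)-\widehat V^*_t(1)=1>0$ for every $\gamma\in(0,1)$. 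Hence~\eqref{policy_new} at $s_0$ selects $5$, then $4$, and the agent must spend three further steps to reach $1$, giving cover time $5$. But your own formula for the optimum, applied to the largest target-free arc $(1,4)$, gives $d_{\mathrm{CW}}+d_{\mathrm{CCW}}+\min(d_{\mathrm{CW}},d_{\mathrm{CCW}})=1+2+1=4$, realized by $0\!\to\!1\!\to\!0\!\to\!5\!\to\!4$. So the greedy rule with $\gamma$-biased tie-breaking is strictly suboptimal here; your intended exchange/induction argument cannot succeed, and the paper's reduction of the cycle to a path graph is not a valid proof either.
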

\begin{proof}
The result follows directly from Proposition~\ref{prop_1} by noting that Algorithm~\ref{alg_heur_ct} updates the target set each time the agent visits a target. For complete graphs, since the agent can reach any state from any other state in exactly one step, Algorithm 1 trivially visits the target states optimally, without visiting any state that is not a target. In path graphs with terminal initial conditions, there is only one direction the agent can start to move and hence that path is optimal. In cycle graphs, the agent has two options for the
initial direction of motion. From Proposition 2, the agent chooses the direction which takes it to the nearest
unvisited target. Once that target is visited, the rest of the problem is like a path graph.
\end{proof} 
In the next section, we consider the task of jointly visiting multiple targets by a team of agents. We discuss the optimal solution to the problem of partitioning targets to multiple agents as defined in Problem~\ref{probl_multi_ct} and propose a suboptimal procedure. 

\section{Partitioning Targets to Multiple Agents}
\label{sec:part_multiple}

Multi-agent planning in our framework has two stages. The first stage partitions the targets to agents as defined in Problem~\ref{probl_multi_ct}, and the second stage consists of optimal planning for each agent described in Problem~\ref{probl_ct}. In Section~\ref{sec:exact_sol_sec}, we showed that optimal planning for single agent is NP-complete. However, even if Problem~\ref{probl_ct} is solved to optimality for all agents operating on a deterministic MDP, Problem~\ref{probl_multi_ct} reduces to a graph partitioning problem, which is provably NP-hard~\cite{graph_part}.

Previous work~\cite{mTSP} which solves a partitioning problem similar to Problem~\ref{probl_multi_ct} deals with a multi-agent traveling salesman problem on a complete graph. Though our work also aims to solve a partitioning problem, it deals with a Markov decision process and not necessarily a complete graph. Hence, we base our approach to Problem~\ref{probl_multi_ct} on ideas from~\cite{mTSP}, but substantially adapting their method for a stochastic environment. We also present some theoretical conditions under which our algorithm generates optimal partitions on an environment with clustered targets.

\subsection{Average length of a Hamiltonian path}
\label{avg_ham_sec}

Since the traveling salesman problem is defined on a weighted complete graph, we compute a weighted complete directed graph $\mathcal{G} = (S, E)$ for the MDP $\mathcal{M}$ such that the weights are ${w(s_1, s_2)=\min_{\pi}\mathbb{E}^{\pi}\left[C^{\{s_2\}}_{s_1}\right]=\min_{\pi}\mathbb{E}^{\pi}\left[H^{s_2}_{s_1}\right]}$ for all ${s_1,s_2\in S}$. The weights $w(s_1, s_2)$ are the optimal expected hitting times for every pair of states $s_1, s_2 \in S$ and all the weights are finite because of Assumption~\ref{ass1}. The weights can be computed using the policy iteration procedure described in Section~\ref{sec:exact_sol_sec} with $\mathcal{V}=\{s_2\}$ and $s_0 = s_1$ for all $s_1, s_2\in~S$ which requires $O(|S|^3|A|)$ number of operations. Hence, the time to compute graph $\mathcal{G}$ is polynomial in the number of states and actions. 

As discussed in Section~\ref{sec:exact_sol_sec}, since computing the optimal cover time is NP-complete, we adopt the idea used in~\cite{mTSP} to approximate the optimal cover time. Let $\mathcal{G}_i$ be the sub-graph of $\mathcal{G}$ induced by $P_i$, where $P_i$ are the target states assigned to agent $i$ as defined in Problem~\ref{probl_multi_ct}. We denote the average length of a Hamiltonian path which covers all the states in $\mathcal{G}_i$ starting from $s_0 \in S$ by $L_a(\mathcal{G}_i)$. We use the average length of a Hamiltonian path as a heuristic for the optimal cover time $\mathbb{E}^{\pi_i^*}\left[C^{P_i}_{s_0}\right]$ for all $i\in~\{1,2,\ldots,m\}$. Since we consider ${s_0\in S}$ as the initial state throughout this work, we don't explicitly mention $s_0$ in $L_a(\mathcal{G}_i)$. Let $|P_i| = n_i$ for all $i\in~\{1,2,\ldots,m\}$. Since there are $n_i(n_i-1)$ edges on the sub-graph induced by the states $P_i$, and there are $n_i-1$ edges on a path to cover all the states in $P_i$, 
\begin{equation}
\begin{split}
L_a(\mathcal{G}_i) = (n_i-1)\left(\frac{\sum_{s_1 \in P_i}\sum_{s_2 \in P_i}w(s_1, s_2)}{n_i(n_i-1)}\right) + \\ \frac{\sum_{s \in P_i}w(s_0, s)}{n_i}, \\
\Rightarrow L_a(\mathcal{G}_i) = \frac{\sum_{s_1 \in P_i}\sum_{s_2 \in P_i}w(s_1, s_2) + \sum_{s\in P_i}w(s_0, s)}{n_i}.
    \label{avg_ham}
    \end{split}
\end{equation}

Using~\eqref{avg_ham}, we can compute $L_a(\mathcal{G}_i)$ for each ${i\in\{1,2,\ldots,m\}}$ in $O(n_i^2)$. Since~${\sum_{i=1}^mn_i = |\mathcal{V}|}$ and ${\sum_{i=1}^mn_i^2 < |\mathcal{V}|^2}$, we can compute $L_a(\mathcal{G}_i)$ for all $i\in~\{1,2,\ldots,m\}$ in $O(|\mathcal{V}|^2)$. Hence, instead of attempting to solve~\eqref{prob_multi} in Problem~\ref{probl_multi_ct}, we try to find a partition $\mathcal{P}'$ that solves the below optimization problem using $L_a(\mathcal{G}_i)$ from~\eqref{avg_ham}:
\begin{equation}
\mathcal{P}' = \argmin_{\mathcal{P}} \max_{i} \left(L_a(\mathcal{G}_i)\right).
    \label{prob_multi_avg}
\end{equation}
The problem in~\eqref{prob_multi_avg} is the same problem in~\eqref{prob_multi}, but replaces the optimal cover time $\mathbb{E}^{\pi_i^*}\left[C^{P_i}_{s_0}\right]$ with the average length of the Hamiltonian path $L_a(\mathcal{G}_i)$ on the sub-graph $\mathcal{G}_i$. 

The optimization problem in~\eqref{prob_multi_avg} is NP-hard because it reduces to the popular number partition problem (NPP)~\cite{NPP}, which is NP-hard as shown in~\cite{mTSP}. The NPP seeks to divide a set $\mathcal{S}$ of positive integers with a fixed even sum $K$ into two subsets $\{\mathcal{S}_1, \mathcal{S}_2\}$ such that the sum of numbers in each subset is exactly $\frac{K}{2}$. Similarly, given a complete graph $\mathcal{G}$ and ${\mathcal{V}\subseteq S\setminus~\{s_0\}}$,~\eqref{prob_multi_avg} asks us to find a partition $\{\mathcal{G}_1, \mathcal{G}_2\}$ where $\mathcal{G}_1\cup \mathcal{G}_2 = \mathcal{V}$ such that $\max \{L_a(\mathcal{G}_1), L_a(\mathcal{G}_2)\} = \frac{K}{2}$ for $m=2$ agents. Thus, Problem~\eqref{prob_multi_avg} is NP-hard and we adopt the heuristic presented in~\cite{mTSP} for the graph $\mathcal{G}$.

\subsection{Partitioning by transfers and swaps}
\label{part_trans_swaps}

The heuristic partitioning procedure searches for a series of transfers and swaps of states between pairs of sub-graphs $(\mathcal{G}_i, \mathcal{G}_k)$ to decrease $\max\{L_a(\mathcal{G}_i), L_a(\mathcal{G}_k)\}$. We denote the sum of edge weights of a sub-graph $\mathcal{G}_i$ by

\begin{equation}
W(\mathcal{G}_i) = \sum_{s_1 \in P_i}\sum_{s_2 \in P_i}w(s_1, s_2) + \sum_{s\in P_i}w(s_0, s),
    \label{sum_edge}
\end{equation}
and the contribution of a state $s\in S$ to $W(\mathcal{G}_i)$ by
\begin{equation}
\Delta W(\mathcal{G}_i, s) = \sum_{s_1 \in P_i}w(s_1, s) + \sum_{s_1 \in P_i}w(s, s_1) + w(s_0, s).
    \label{sum_edge_contr}
\end{equation}
From~\eqref{sum_edge} and~\eqref{avg_ham}, we can write the average length of Hamiltonian path on a sub-graph $\mathcal{G}_i$ by
\begin{equation}
L_a(\mathcal{G}_i) = \frac{W(\mathcal{G}_i)}{n_i},
    \label{size_sub_graph}
\end{equation}
and the maximum average length of Hamiltonian path for a partition $\mathcal{P}$ by
\begin{equation}
M_a(\mathcal{P}) = \max_i L_a(\mathcal{G}_i).
    \label{size_part}
\end{equation}
Let us analyze the transfer of a state $s_i\in P_i$ from $\mathcal{G}_i$ to $\mathcal{G}_k$. After the transfer, for the sub-graphs $\mathcal{G}_i'$ and $\mathcal{G}_k'$,
\begin{equation}
\begin{split}
    L_a(\mathcal{G}_i') = \frac{W(\mathcal{G}_i) - \Delta W(\mathcal{G}_i, s_i)}{n_i-1}, \\
    L_a(\mathcal{G}_k') = \frac{W(\mathcal{G}_k) + \Delta W(\mathcal{G}_k, s_i)}{n_k+1}.
\end{split}
    \label{trans_G_i}
\end{equation}
Transfers from $\mathcal{G}_i$ to $\mathcal{G}_k$ are more useful when $L_a(\mathcal{G}_i)\gg~L_a(\mathcal{G}_k)$. A transfer might still be beneficial if $\max\{L_a(\mathcal{G}_i), L_a(\mathcal{G}_k)\}$ is reduced. However, when $L_a(\mathcal{G}_i)$ and $L_a(\mathcal{G}_k)$ are nearly equal, then a transfer might not reduce $\max\{L_a(\mathcal{G}_i), L_a(\mathcal{G}_k)\}$. In this case, swapping two states between the sub-graphs might reduce $\max\{L_a(\mathcal{G}_i), L_a(\mathcal{G}_k)\}$. Let $s_i\in P_i$ be moved from $\mathcal{G}_i$ to $\mathcal{G}_k$, and $s_k\in~P_k$ be moved from $\mathcal{G}_k$ to $\mathcal{G}_i$ simultaneously. Then, the sum of the edge weights of the new sub-graphs $\mathcal{G}_i'$ and $\mathcal{G}_k'$ are
\begin{equation}
\begin{split}
    W(\mathcal{G}_i') = W(\mathcal{G}_i) - \Delta W(\mathcal{G}_i, s_i) + \Delta W(\mathcal{G}_i, s_k) - \\ w(s_i, s_k) - w(s_k ,s_i), \\
    W(\mathcal{G}_k') = W(\mathcal{G}_k) - \Delta W(\mathcal{G}_k, s_k) + \Delta W(\mathcal{G}_k, s_i) - \\ w(s_i, s_k) - w(s_k ,s_i).
\end{split}
    \label{swap_G_i}
\end{equation}

From~\eqref{swap_G_i}, $L_a(\mathcal{G}_i')$ and $L_a(\mathcal{G}_k')$ can be computed using~\eqref{size_sub_graph}. Since there are $n_i$ potential transfers from $\mathcal{G}_i$, and $n_in_k$ potential swaps between $\mathcal{G}_i$ and $\mathcal{G}_k$, the best transfer and swap can be computed in $O(|\mathcal{V}|^2)$.

We present our heuristic partitioning algorithm in Algorithm~\ref{alg_part}. In practice, we use a heuristic procedure~\cite{greedy_k} to generate the initial partition $\mathcal{P}$ which will be described in Section~\ref{sec:numerical} when presenting the numerical results. A substantial difference of our procedure from the algorithm presented in~\cite{mTSP} is the computation of model graph~$\mathcal{G}(S, E)$. Also,~\cite{mTSP} does either a swap or a transfer at every iteration, but Algorithm~\ref{alg_part} could possibly do both swaps and transfers at each iteration. Since there are only finite number of swaps and transfers possible, and we recheck a partition $\mathcal{P}$ only if $M_a(\mathcal{P})$ has strictly reduced in the previous iteration, Algorithm~\ref{alg_part} terminates in finite time, generating a local optimum for the problem in~\eqref{prob_multi_avg}.

We will prove that Algorithm~\ref{alg_part} will generate optimal partitions $\mathcal{P}^*$ in Problem~\ref{probl_multi_ct} for MDPs with \textit{clustered} target states as defined below. The clusters are defined based on the hitting time between states. We denote $w_{c}$ to be the \textit{worst case hitting time} for pairs of target states within the same cluster, and $w_{l}$ to be the \textit{best case hitting time} for pairs of target states in different clusters.

\begin{defn}
Given an MDP $\mathcal{M}(S,A,\mathcal{T})$, we define a set of target states $\mathcal{V}$ to be \emph{clustered} if $\mathcal{V}$ can be partitioned to $m$ clusters as $\mathcal{R}(w_{c}, w_{l}, \mathcal{V})=\{R_1, R_2, \ldots, R_m\}$ where each cluster $R_i = \{s_i^1, s_i^2, \ldots, s_i^n\}$ has $n$ targets, ${\cup_{i=1}^m R_i=\mathcal{V}\subseteq S}$, and ${R_i\cap R_j=\emptyset}$ for all ${i, j\in\{1,2,\ldots,m\}, i\neq j}$. Additionally,
\begin{equation}
\begin{split}
H_{s_1}^{s_2, \pi^*_{s_1, s_2}} \in [w_c', w_c] \ \textnormal{for all} \ s_1, s_2 \in R_i, s_1 \neq s_2, \\ i \in \{1,2,\ldots,m\},
    \label{in_cluster_bounds_ht}
    \end{split}
    \end{equation}
\begin{equation}
\begin{split}
H_{s_1}^{s_2, \pi^*_{s_1, s_2}} \in [w_{l}, w_l'] \ \textnormal{for all} \ s_1 \in R_i, s_2 \in R_j, \\ i,j \in \{1,2,\ldots,m\}, i \neq j,
    \label{out_cluster_bounds_ht}
    \end{split}
\end{equation}
where policy $\pi^*_{s_1, s_2} : S \times S \to A$ is the optimal policy that solves Problem~\ref{probl_ct} with $\mathcal{V}=\{s_2\}$ and $s_0 = s_1$, and $w_c \leq w_l$. We also define $w_1$ and $w_2$ so that
\begin{equation}
H_{s_0}^{s, \pi^*_{s_0, s}} \in [w_{1}, w_{2}] \ \textnormal{for all} \ s\in~\mathcal{V}.
\label{ht_initial_bound}
\end{equation}

\label{cluster_defn}
\end{defn}

\LinesNumbered
\begin{algorithm}[!t]
\caption{Partitioning by transfers and swaps}
\label{alg_part}
\SetAlgoLined
\textbf{Input}: $S, A, P, \mathcal{V}, m, s_0$\;
Compute $\mathcal{G}(S, E)$ using policy iteration\;
\textbf{Initialize:} A partition $\mathcal{P}$; all subgraphs are unchecked for swap and for transfer\;
\Repeat{$M_a(\mathcal{P})$ \ \textnormal{has not reduced}}{
\ForAll{\textnormal{pairs of unchecked subgraphs} $(\mathcal{G}_i, \mathcal{G}_k)$}{
$S_{min} = \max\left(L_a(\mathcal{G}_i), L_a(\mathcal{G}_k)\right)$\;
Compute $L_a(\mathcal{G}_i')$ and $L_a(\mathcal{G}_k')$ using~\eqref{swap_G_i}\;
\eIf{$\max\left(L_a(\mathcal{G}_i'), L_a(\mathcal{G}_k')\right)<~S_{min}$}{
Swap the states $(s_i, s_k)$ between $(\mathcal{G}_i, \mathcal{G}_k)$\;
}{
$(i, k) \leftarrow $ checked for swap\;
}
Compute $L_a(\mathcal{G}_i')$ and $L_a(\mathcal{G}_k')$ using~\eqref{trans_G_i}\;
\eIf{$\max\left(L_a(\mathcal{G}_i'), L_a(\mathcal{G}_k')\right)<~S_{min}$}{
Transfer the state $s_i$ from $\mathcal{G}_i$ to $\mathcal{G}_k$\;
}{
$(i, k) \leftarrow $ checked for transfer\;
}
Update the partition $\mathcal{P}$\;
}
}
\textnormal{\textbf{Output:} Partition $\mathcal{P}$}
\end{algorithm}

On graph $\mathcal{G}(S, E)$, since the weights are defined as ${w(s_1, s_2)=\min_{\pi}\mathbb{E}\left[H_{s_1}^{s_2, \pi}\right]=\mathbb{E}\left[H_{s_1}^{s_2, \pi^*_{s_1, s_2}}\right]}$ for all ${s_1, s_2\in S}$, the bounds in~\eqref{in_cluster_bounds_ht},~\eqref{out_cluster_bounds_ht} and~\eqref{ht_initial_bound} also apply to the weights $w(s_1, s_2)$. From~\eqref{in_cluster_bounds_ht} and~\eqref{out_cluster_bounds_ht}, the bound for weights of states within the same cluster is
\begin{equation}
\begin{split}
w(s_1, s_2) \in [w_c',w_c] \ \textnormal{for all} \ s_1, s_2 \in R_i, s_1 \neq s_2, \\ i \in \{1,2,\ldots,m\},
    \label{cluster_bound_alg}
    \end{split}
    \end{equation}
and the bound for weights of states in different clusters is
\begin{equation}
\begin{split}
w(s_1, s_2) \in [w_{l}, w_l'], \ \textnormal{for all} \ s_1 \in R_i, s_2 \in R_j, \\ i,j \in \{1,2,\ldots,m\}, i \neq j.
    \label{out_cluster_bounds_alg}
    \end{split}
\end{equation}

In Fig.~\ref{fig:theorem}, we depict a deterministic MDP (graph) with ${m=3}$ clusters and $n=4$ targets in each cluster which satisfies the bounds in Definition~\ref{cluster_defn} where ${w_{c}=2, w_{l}=26, w_1=13, w_2=15}$. Such large difference between $w_c$ and $w_l$ will be more clear from the below theorem. Having defined an MDP with clustered target states, we expect the partition $\mathcal{R}$ to be optimal for Problem~\ref{probl_multi_ct} when $w_l$ is sufficiently larger than $w_c$. Indeed, we prove this claim in the following theorem.

\begin{thm}
Under Assumption~\ref{ass1}, let the set of target states $\mathcal{V}$ on an MDP {$\mathcal{M}(S, A, \mathcal{T})$} be clustered as given in Definition~\ref{cluster_defn} and
\begin{equation}
    \begin{split}
    w_{l} > (n-1)w_{c} + (w_2 - w_1).
        \label{cluster_opt_result}
    \end{split}
\end{equation}
Then, the clustered partition $\mathcal{R}$ is the optimal partition $\mathcal{P}^*$ for Problem~\ref{probl_multi_ct}.
\label{cluster_is_optimal}
\end{thm}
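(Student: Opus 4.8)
The plan is to establish optimality of $\mathcal{R}$ by a two-sided comparison: I would show that the objective $\max_i \mathbb{E}^{\pi_i^*}[C^{P_i}_{s_0}]$ of Problem~\ref{probl_multi_ct} evaluated at $\mathcal{R}$ is at most $w_2 + (n-1)w_c$, while its value at \emph{every} other partition is at least $w_1 + w_l$. Since the hypothesis~\eqref{cluster_opt_result} is exactly the rearrangement $w_1 + w_l > w_2 + (n-1)w_c$, these two bounds together force $\mathcal{R}$ to be the unique minimizer.

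For the upper bound, fix an agent assigned a full cluster $R_i = \{s_i^1,\dots,s_i^n\}$ under $\mathcal{R}$. Since the optimal cover time is no larger than the expected completion time of any fixed visiting order, I would bound it by the strategy that first drives the agent optimally from $s_0$ to $s_i^1$ and then optimally from $s_i^k$ to $s_i^{k+1}$ for $k=1,\dots,n-1$. By the strong Markov property the expected duration of this strategy is the sum of the corresponding optimal hitting times $w(s_0,s_i^1) + \sum_{k=1}^{n-1} w(s_i^k, s_i^{k+1})$, and using $w(s_0,\cdot)\le w_2$ from~\eqref{ht_initial_bound} together with the within-cluster bound $w(\cdot,\cdot)\le w_c$ from~\eqref{cluster_bound_alg}, this is at most $w_2 + (n-1)w_c$. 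Taking the maximum over agents gives $\max_i \mathbb{E}^{\pi_i^*}[C^{R_i}_{s_0}] \le w_2 + (n-1)w_c$.

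For the lower bound, I would first argue by pigeonhole that any partition $\mathcal{P}\neq\mathcal{R}$ must assign to some agent $j$ a set $P_j$ containing targets from two distinct clusters, say $s_a \in P_j \cap R_a$ and $s_b \in P_j \cap R_b$ with $a\neq b$; indeed, if every part were contained in a single cluster then, since there are $m$ parts covering $m$ clusters of $n$ targets each, each cluster would have to coincide with exactly one part, recovering $\mathcal{R}$. For this agent, covering $P_j$ is at least as slow as covering $\{s_a,s_b\}$, so $\mathbb{E}^{\pi_j^*}[C^{P_j}_{s_0}] \ge \min_{\pi}\mathbb{E}^{\pi}[C^{\{s_a,s_b\}}_{s_0}]$. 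Writing this cover time as the first hitting time $\tau_1$ of $\{s_a,s_b\}$ plus the residual time to reach the second target, the residual leg is a transition between two distinct clusters and hence has expected length at least $w_l$ by~\eqref{out_cluster_bounds_alg} and the strong Markov property applied at $\tau_1$, while reaching the first target should cost at least $w_1$ by~\eqref{ht_initial_bound}; this yields the bound $w_1 + w_l$.

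Combining, for every $\mathcal{P}\neq\mathcal{R}$ we obtain $\max_i \mathbb{E}^{\pi_i^*}[C^{P_i}_{s_0}] \ge w_1 + w_l > w_2 + (n-1)w_c \ge \max_i \mathbb{E}^{\pi_i^*}[C^{R_i}_{s_0}]$, so $\mathcal{R}=\mathcal{P}^*$. The main obstacle is the first-leg estimate $\mathbb{E}[\tau_1] \ge w_1$: in a deterministic graph $\tau_1$ is simply the shortest-path distance to the nearer of $s_a,s_b$, which is at least $w_1$, but in a genuinely stochastic MDP the expected time to hit the \emph{set} $\{s_a,s_b\}$ can be strictly smaller than the optimal hitting time of either individual state, so this step appears to need the clustering bounds to be read as almost-sure bounds on the hitting-time random variables (or some additional argument) to be fully rigorous. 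By contrast, the fixed-order upper bound, the strong Markov decomposition of the residual leg, and the pigeonhole argument are routine.
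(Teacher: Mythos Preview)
Your approach is essentially the same as the paper's: both obtain the upper bound $w_2+(n-1)w_c$ for $\mathcal{R}$ via a fixed visiting order, the lower bound $w_1+w_l$ for any other partition from the fact that some agent must traverse an inter-cluster leg, and then invoke the hypothesis~\eqref{cluster_opt_result}. The paper argues the lower bound by explicitly placing one (then more) pair of states in ``wrong clusters'' and bounding $C_{s_0}^{R_i',\pi^*}\ge w_1+(n-2)w_c'+w_l>w_1+w_l$, whereas you get there more directly via pigeonhole and the two-target sub-problem $\{s_a,s_b\}$; the content is the same.

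Regarding the obstacle you flag on the first-leg estimate $\mathbb{E}[\tau_1]\ge w_1$: the paper sidesteps this in exactly the way you anticipate. Definition~\ref{cluster_defn} bounds the hitting-time \emph{random variables} $H_{s_1}^{s_2,\pi^*}$ themselves (not merely their expectations), and the paper's proof uses these as pathwise inequalities on cover times, writing e.g.\ $C_{s_0}^{R_i',\pi^*_{s_0,R_i'}}\ge w_1+(n-2)w_c'+w_l$ directly at the sample-path level before passing to expectations. So your proposed resolution---reading the clustering bounds as almost-sure---is precisely what the paper does, and the same implicit assumption (that these pathwise bounds transfer to the cover-time-optimal policy, not just the single-target optimal policies named in Definition~\ref{cluster_defn}) is present in both arguments.
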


\begin{figure}[!t]
         \centering        \includegraphics[width=0.7\linewidth]{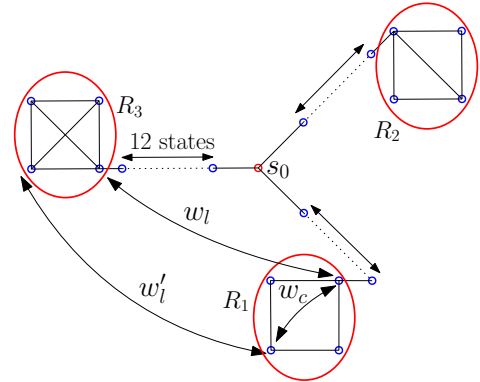}
         \caption{A clustered graph which satisfies the bounds in Definition~\ref{cluster_defn}.}
         \label{fig:theorem}
\end{figure}

\begin{proof}
We first consider the hitting time between states in a partition as given in Definition~\ref{cluster_defn}, but a pair of states are in the ``wrong clusters". Let that partition be ${\mathcal{R}'=\{R_1, R_2, \ldots, R_i', R_j', \ldots, R_m\}}$ where ${R_i'=\{s_{i}^1, s_{i}^2, \ldots, s_{j}^k, \ldots, s_{i}^n\}}$ and ${R_j'=\{s_{j}^1, s_{j}^2, \ldots, s_{i}^k, \ldots, s_{j}^n\}}$. The state $s_i^j$ denotes a state that belongs to cluster $i$ as given in Definition~\ref{cluster_defn} for all $i,j\in\{1,2,\ldots,m\}$. The set of targets $R_i'$ and $R_j'$ each contain a state that belongs to cluster $j$ and $i$, respectively. All partitions other than $\mathcal{R}$ can be represented by assigning more pairs of states to the wrong clusters. We denote $\pi_{s_0, \mathcal{V}}^* : S \times 2^{\mathcal{V}} \to A$ to be the optimal policy that solves Problem~\ref{probl_ct} with target set $\mathcal{V}\subseteq~S$ and initial state $s_0\in S$. From~\eqref{in_cluster_bounds_ht}~and~\eqref{out_cluster_bounds_ht}, the bounds on cover times are
\begin{equation}
\begin{split}
C_{s_0}^{R_j, \pi^*_{s_0, R_j}} \leq w_2 + (n-1)w_{c} \ \textnormal{for all} \ j\in\{1,2,\ldots,m\},
    \label{cover_bounds_cluster}
    \end{split}
\end{equation}
\begin{equation}
\begin{split}
C_{s_0}^{R_i', \pi^*_{s_0, R_i'}} \geq w_{1} + (n-2)w_c' + w_{l} > w_{1} + w_{l}, \\
C_{s_0}^{R_j', \pi^*_{s_0, R_j'}} \geq w_{1} + (n-2)w_c' + w_{l} > w_{1} + w_{l}.
    \label{cover_bounds_cluster_out}
    \end{split}
\end{equation}
Since hitting time and cover time are random variables, the above bounds also hold in expectation. For $\mathcal{R}$ to be a partition with expected cover time less than that of $\mathcal{R}'$, the following should hold. From~\eqref{cover_bounds_cluster}~and~\eqref{cover_bounds_cluster_out}, if
\begin{equation}
\begin{split}
w_1 + w_{l} \geq w_2 + (n-1)w_{c}, \\
\Rightarrow w_{l} \geq (n-1)w_{c} + (w_2 - w_1),
    \label{condition}
    \end{split}
\end{equation}
then the expected cover time of partition $\mathcal{R}$ is less than that of $\mathcal{R}'$. There may exist two pairs of states in two wrong clusters such that the partition is ${\mathcal{R}''=\{R_1, R_2, \ldots, R_i'', R_j'', \ldots, R_m\}}$ where ${R_i''=\{s_{i}^1, s_{i}^2, \ldots, s_{j}^k, s_{j}^l, \ldots, s_{i}^n\}}$ and ${R_j''=\{s_{j}^1, s_{j}^2, \ldots, s_{i}^k, s_i^l, \ldots, s_{j}^n\}}$. Then,
\begin{equation}
\begin{split}
C_{s_0}^{R_i'', \pi^*_{s_0, R_i''}} \geq w_{1} + (n-3)w_c' + 2w_{l} >  w_{1} + w_{l}, \\ C_{s_0}^{R_j'', \pi^*_{s_0, R_j''}} \geq w_{1} + (n-3)w_c' + 2w_{l} > w_{1} + w_{l}.
    \label{cover_bounds_cluster_out_2}
    \end{split}
\end{equation}
Similarly, from~\eqref{cover_bounds_cluster_out_2}, the condition in~\eqref{condition} should hold for the expected cover time of $\mathcal{R}$ to be less than the expected cover time of any partition with atleast a pair of states in the wrong cluster. Since all partitions other than $\mathcal{R}$ can be represented by assigning more pairs of states to the wrong clusters, $\mathcal{R}$ is the optimal partition for Problem~\ref{probl_multi_ct} if~\eqref{condition} is satisfied. 
\end{proof}

Having proved that the clustered partition $\mathcal{R}$ is the optimal partition, we prove that Algorithm~\ref{alg_part} generates $\mathcal{R}$ under certain conditions.

\begin{thm}
Under Assumption~\ref{ass1}, given an initial partition $\mathcal{P}$ of the target states $\mathcal{V}$ where $|P_i|=n$ for all $i,j~\in~\{1,2,\ldots,m\}$, if there exists a clustered partition $\mathcal{R}$ as in Definition~\ref{cluster_defn} and 
\begin{equation}
w_l > 3nw_c + w_c' + \frac{w_2 - w_1}{2},
    \label{condition_R}
\end{equation}
then Algorithm~\ref{alg_part} will generate the partition $\mathcal{R}$ from an initial partition $\mathcal{P}$.
\label{cluster_alg} 
\end{thm}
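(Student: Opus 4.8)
The plan is to combine the global optimality of $\mathcal{R}$ from Theorem~\ref{cluster_is_optimal} with the observation that Algorithm~\ref{alg_part} is a strictly monotone local search on $M_a$, and then to upgrade ``local optimum'' to ``$\mathcal{R}$'' by showing that, under the stronger hypothesis~\eqref{condition_R}, every partition other than $\mathcal{R}$ still admits a cost-reducing swap or transfer. First I would record the separation of scales that~\eqref{cluster_bound_alg} and~\eqref{out_cluster_bounds_alg} impose on $L_a$ through~\eqref{size_sub_graph}: any \emph{pure} subgraph (all its states lying in one cluster) has $L_a \le (n-1)w_c + w_2$, whereas any \emph{mixed} subgraph, carrying at least one cross-cluster adjacency of weight $\ge w_l$ in each direction, satisfies $L_a \ge w_l + w_1$. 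The large-$w_l$ clustered regime quantified by~\eqref{condition_R} makes the mixed bound strictly exceed the pure bound, so in any partition the maximizer in~\eqref{size_part} is a mixed subgraph whenever one exists. A short pigeonhole argument then identifies $\mathcal{R}$ as the only all-pure partition into $m$ nonempty parts: if every part lies in a single cluster, the map (part $\mapsto$ its cluster) is a surjection of an $m$-set onto an $m$-set, hence a bijection, forcing each part to be an entire cluster. Consequently $M_a(\mathcal{P}) > M_a(\mathcal{R})$ for every $\mathcal{P} \neq \mathcal{R}$, so $\mathcal{R}$ is the unique global minimizer of $M_a$.

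Next I would exploit the local-search structure. Since each accepted swap or transfer strictly decreases $\max(L_a(\mathcal{G}_i), L_a(\mathcal{G}_k))$ for the pair it touches, and the outer loop of Algorithm~\ref{alg_part} revisits $\mathcal{P}$ only when $M_a(\mathcal{P})$ has strictly dropped, the algorithm produces a finite, strictly decreasing sequence of $M_a$ values and halts at some $\mathcal{P}^{\dagger}$ admitting no improving move. It then suffices to prove $\mathcal{P}^{\dagger} = \mathcal{R}$, i.e. that no $\mathcal{P} \neq \mathcal{R}$ is such a fixed point. Fixing such a $\mathcal{P}$, let $\mathcal{G}_i$ attain $M_a(\mathcal{P})$; by the first paragraph $\mathcal{G}_i$ is mixed and holds a \emph{foreign} state $s$ whose home cluster differs from the plurality cluster of $\mathcal{G}_i$. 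I would exhibit an improving move that lets $s$ rejoin its own cluster, either transferring $s$ out of $\mathcal{G}_i$ via~\eqref{trans_G_i} or swapping it against a native state $s_k \in \mathcal{G}_k$ via~\eqref{swap_G_i}. Evaluating these with the weight bounds, removing $s$ lowers $W(\mathcal{G}_i)$ by the $w_l$-scale weight of the cross-cluster adjacencies it destroys, while the quantities it introduces are only of $w_c$- and $w_2$-scale; hypothesis~\eqref{condition_R} is precisely the budget guaranteeing that this $w_l$-scale gain dominates the accumulated within-cluster cost (the $3nw_c + w_c'$ term) and the spread in initial-state distances (the $\tfrac{w_2-w_1}{2}$ term), so that $L_a(\mathcal{G}_i)$ strictly decreases while the receiving part stays below $M_a(\mathcal{P})$. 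Hence $\mathcal{P}$ is not a fixed point, a contradiction, so Algorithm~\ref{alg_part} terminates at $\mathcal{R}$, which is Problem~\ref{probl_multi_ct}'s optimal partition $\mathcal{P}^{*}$ by Theorem~\ref{cluster_is_optimal}.

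I expect the existence-of-an-improving-move step to be the main obstacle, for two reasons. Combinatorially, a misplacement need not be a clean transposition: foreign states may form a cyclic pattern across several parts (a cluster-$b$ state in the $a$-part, a cluster-$c$ state in the $b$-part, and so on), so I must argue that a single swap or transfer still reduces the total cross-cluster adjacency and, crucially, lowers the \emph{maximum} $L_a$ rather than merely the sum; choosing the move incident to the current $M_a$-maximizer, whose destroyed adjacency is already of scale $w_l$, is the lever here. Quantitatively, I must ensure that the worst-case cross-cluster weight $w_l'$ does not survive into the final inequality—this requires organizing the bookkeeping so that the cross-cluster adjacencies common to the pre- and post-move configurations cancel, leaving only the single eliminated $w_l$-scale adjacency on the favorable side against the $w_c, w_c', w_1, w_2$ terms. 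Finally I would confirm that transfers cannot harmfully unbalance the sizes: enlarging a part beyond a full cluster necessarily mixes it and pushes its $L_a$ above the pure bound, so such transfers are rejected, and the search effectively proceeds by size-preserving swaps that keep every denominator in~\eqref{size_sub_graph} equal to $n$, making the reduction of $M_a$ equivalent to the reduction of $\max_i W(\mathcal{G}_i)$.
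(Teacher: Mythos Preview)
Your high-level strategy differs from the paper's and is in one respect cleaner. The paper does not argue via a fixed point of $M_a$; instead it fixes an arbitrary equal-size partition, writes $P_i=\cup_j Q_i^j$ with $Q_i^j\subseteq R_j$ and $|Q_i^j|=n_i^j$, and computes $\Delta W(\mathcal{G}_i),\Delta W(\mathcal{G}_k)$ for a swap of $s_i^j\in Q_i^j$ against $s_k^l\in Q_k^l$ directly from~\eqref{sum_edge_contr} and~\eqref{swap_G_i}. It then bounds both quantities and shows that whenever $n_i^j\le n_i^l$ and $n_k^l\le n_k^j$ (which always occurs for some $i,j,k,l$ by the row/column constraints $\sum_j n_i^j=\sum_i n_i^j=n$), condition~\eqref{condition_R} forces $\Delta W(\mathcal{G}_i)<0$ and $\Delta W(\mathcal{G}_k)<0$ simultaneously, so the swap is accepted; iterating drives all off-diagonal $n_i^j$ to zero. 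Your fixed-point formulation avoids having to argue that the algorithm actually follows this particular sequence of swaps, which the paper leaves somewhat implicit.

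The genuine gap in your plan is the elimination of $w_l'$. Your proposed mechanism---arranging the bookkeeping so that cross-cluster adjacencies ``common to the pre- and post-move configurations cancel''---does not work: when you swap a foreign state $s$ out of $\mathcal{G}_i$ and a native state in, the incoming state forms \emph{new} cross-cluster edges with every remaining foreign state of $\mathcal{G}_i$, and these are only bounded above by $w_l'$, while the removed edges are only bounded below by $w_l$. Nothing cancels; you are left comparing $(n-n_i^l)w_l'$ against $(n-n_i^j)w_l$, and without further control on $w_l'$ you cannot extract~\eqref{condition_R}. The paper's device here is the separate inequality $w_l'\le w_l+2w_c$ (a triangle-type bound: to go from one cluster to another you can detour through a pair of within-cluster hops and a single cross-cluster hop), which it invokes to convert every $w_l'$ into $w_l$ plus a $w_c$-scale correction. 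This is precisely what produces the $3nw_c$ term in~\eqref{condition_R}. Your proposal never identifies this bound, and without it the quantitative step---which you correctly flag as the main obstacle---cannot close.

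A secondary point: your pure/mixed dichotomy and the pigeonhole identification of $\mathcal{R}$ are correct for equal-size parts, but you should be explicit that the algorithm never accepts a transfer under~\eqref{condition_R}. The paper dispatches this by noting that with all $|P_i|=n$ the swap analysis already exhibits an accepted move whenever $\mathcal{P}\neq\mathcal{R}$, and that a transfer would raise $W$ on the receiving side; your sketch gestures at this but the argument that ``enlarging a part beyond a full cluster necessarily mixes it'' is not by itself enough, since the acceptance test compares $L_a$ values with different denominators.
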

\begin{proof}
We are operating on the complete graph $\mathcal{G} = (S, E)$ computed in line~2 of Algorithm~\ref{alg_part}. Let the set of targets associated with agent $i$ be 
\begin{equation}
\begin{split}
P_i = \cup_{j=1}^m Q_i^j \  \textnormal{where} \ Q_i^j \subseteq R_j, \ \cup_{i=1}^m Q_i^j = R_j, \ \sum_{j=1}^m |Q_i^j| = n, \\ Q_i^j \cap Q_k^l = \emptyset, \ \textnormal{for all} \ i, j, k, l~\in~\{1,2,\ldots,m\},i \neq j \neq k \neq l.
\label{init_part_cond}
\end{split}
\end{equation}
The set $Q_i^j$ is the set of target states which belong to cluster $j$ and are associated with agent $i$. Consider a swap operation between two arbitrary sets $P_i, P_k \in \mathcal{P}$ and two target states $s_{i}^j \in Q_i^j, s_k^l \in Q_k^l$. Let the sets after the swap operation be $P_i', P_k'$ and the change in the sum of weights be given by $\Delta W(\mathcal{G}_i)=W(\mathcal{G}_i')~-~W(\mathcal{G}_i)$. Therefore, from~\eqref{swap_G_i},
\begin{equation}
\begin{split}
\Delta W(\mathcal{G}_i) = -\Delta W(\mathcal{G}_i, s_i^j) + \Delta W(\mathcal{G}_i, s_k^l) - \\ w(s_i^j, s_k^l) - w(s_k^l, s_i^j), \\
\Delta W(\mathcal{G}_k) = -\Delta W(\mathcal{G}_k, s_k^l) + \Delta W(\mathcal{G}_k, s_i^j) - \\ w(s_i^j, s_k^l) - w(s_k^l, s_i^j).
\label{change_swap}
\end{split}
\end{equation}
The contribution of a state $s_i^j$ to a sub-graph $\mathcal{G}_i$ can be written from~\eqref{sum_edge_contr} as
\begin{equation}
\begin{split}
\Delta W(\mathcal{G}_i, s_i^j) = \sum_{s_1\in Q_i^j}w(s_1, s_i^j) + \sum_{s_1\in Q_i^j}w(s_i^j, s_1) + \\
\sum_{s_1 \in P_i\setminus~Q_i^j}w(s_1, s_i^j) + \sum_{s_1 \in P_i\setminus~Q_i^j}w(s_i^j, s_1) + w(s_0, s_i^j).
\end{split}
\label{sum_edge_contr_split}
\end{equation}
For ease of writing, let us denote ${|Q_i^j|=n_i^j}$ for all ${i,j\in\{1,2,\ldots,m\}}$. Thus, from~\eqref{ht_initial_bound},~\eqref{sum_edge_contr_split} and~\eqref{out_cluster_bounds_alg},
\begin{equation}
\Delta W(\mathcal{G}_i, s_i^j) \geq 2\left(w_{c}'(n_i^j-1) + w_{l}(n - n_{i}^j)\right) + w_1.
\label{change_bound_1}
\end{equation}
Similarly, from~\eqref{ht_initial_bound},~\eqref{cluster_bound_alg} and~\eqref{sum_edge_contr_split},
\begin{equation}
\Delta W(\mathcal{G}_i, s_k^l) \leq 2\left(w_{c}(n_i^l) + w_{l}'(n - n_{i}^l)\right) + w_2.
\label{change_bound_2}
\end{equation}
Thus, from~\eqref{ht_initial_bound},~\eqref{cluster_bound_alg},~\eqref{out_cluster_bounds_alg} and~\eqref{sum_edge_contr_split}, we can write the bounds for sub-graph $\mathcal{G}_k$ as
\begin{equation}
\begin{split}
\Delta W(\mathcal{G}_k, s_k^l) \geq 2\left(w_{c}'(n_k^l-1) + w_{l}(n - n_{k}^l)\right) + w_1, \\
\Delta W(\mathcal{G}_k, s_i^j) \leq 2\left(w_c(n_k^j) + w_l'(n - n_{k}^j)\right) + w_2.
    \label{change_bound}
    \end{split}
\end{equation}
From~\eqref{change_swap},~\eqref{change_bound_1},~\eqref{change_bound_2} and~\eqref{change_bound},
\begin{equation}
\begin{split}
\frac{\Delta W(\mathcal{G}_i)}{2} \leq (w_l' - w_l)n  - n_i^l (w_l' - w_c) + n_{i}^j(w_l - w_{c}')  + \\ w_c' - w_l +  \frac{w_2 - w_1}{2}, \\
\frac{\Delta W(\mathcal{G}_k)}{2} \leq (w_l' - w_l)n  - n_k^j (w_l' - w_c) + n_{k}^l(w_l - w_{c}') + \\ w_c' - w_l +  \frac{w_2 - w_1}{2}.
    \label{change_swap_bound}
\end{split}
\end{equation}

Using Fig.~\ref{fig:theorem} as an illustration, if we fix $w_l$ to be the smallest weight between states of different clusters, and $w_c$ to be the largest weight between states of same clusters, then the bound for $w_l'$ is
\begin{equation}
w_l' \leq w_l + 2w_c.
    \label{wl2_bound}
\end{equation}
From~\eqref{change_swap_bound} and~\eqref{wl2_bound},
\begin{multline}
    \frac{\Delta W(\mathcal{G}_i)}{2} \leq 
2nw_c - n_i^l (w_l' - w_c) + n_{i}^j(w_l - w_{c}')  + \\ w_c' - w_l +  \frac{w_2 - w_1}{2}.
    \label{bound_result_int_i}
\end{multline}
Since $-w_l'\leq-w_l$, and $w_l - w_c'\leq~w_l$,~\eqref{bound_result_int_i} can be written as
\begin{equation}
    \frac{\Delta W(\mathcal{G}_i)}{2} \leq 
2nw_c - n_i^l(w_l - w_c) + n_i^jw_l + w_c' - w_l +  \frac{w_2 - w_1}{2}.
    \label{bound_result_int1_i}
\end{equation}
Adding and subtracting $n_i^jw_c$ to~\eqref{bound_result_int1_i},
\begin{multline}
    \frac{\Delta W(\mathcal{G}_i)}{2}\leq 
2nw_c - n_i^l(w_l - w_c) + n_i^j(w_l-w_c) + n_i^jw_c + \\ w_c' - w_l +  \frac{w_2 - w_1}{2}.
    \label{bound_result_int2_i}
\end{multline}
Since $n_i^j\leq~n$,~\eqref{bound_result_int2_i} can be written as
\begin{equation}
\begin{split}
    \frac{\Delta W(\mathcal{G}_i)}{2} \leq 
3nw_c - w_l + (n_{i}^j - n_{i}^l)(w_l - w_c) + \\ w_c' +  \frac{w_2 - w_1}{2}.
    \label{bound_result_i}
\end{split}
\end{equation}
Similarly, using the same argument as~\eqref{bound_result_int_i},~\eqref{bound_result_int1_i}, and~\eqref{bound_result_int2_i},
\begin{equation}
\begin{split}
\frac{\Delta W(\mathcal{G}_k)}{2} \leq
3nw_c - w_l + (n_{k}^l - n_{k}^j)(w_l - w_c) + \\ w_c' +  \frac{w_2 - w_1}{2}.
    \label{bound_result_k}
\end{split}
\end{equation}
From~\eqref{bound_result_i} and~\eqref{bound_result_k},
\begin{equation}
\begin{split}
n_{i}^j \leq n_{i}^l \ \textnormal{and} \ n_{k}^l \leq n_{k}^j \ \textnormal{and} \ w_l > 3nw_c + w_c' +  \frac{w_2 - w_1}{2} \Rightarrow \\ \Delta W(\mathcal{G}_i) < 0 \ \textnormal{and} \ \Delta W(\mathcal{G}_k) < 0.
\end{split}
    \label{bound_result}
\end{equation}
From~\eqref{init_part_cond}, since ${\sum_{j=1}^m n_i^j = \sum_{i=1}^m n_i^j= n}$ for all ${i, j\in\{1,2,\ldots,m\}}$, there always exists some $(i,j,k, l)$ such that $n_{i}^j \leq n_{i}^l \ \textnormal{and} \ n_{k}^l \leq n_{k}^j$. If $w_l > 3nw_c + w_c' +  \frac{w_2 - w_1}{2}$, then~\eqref{bound_result} is satisfied, and from Algorithm~\ref{alg_part} and~\eqref{swap_G_i}, the states $s_i^j$ and $s_k^l$ are swapped between $\mathcal{G}_i$ and $\mathcal{G}_k$. After the swap, $n_i^j$ and $n_k^l$ are decremented by one, whereas  $n_i^l$ and $n_k^j$ are incremented by one. Therefore, from~\eqref{bound_result_i} and~\eqref{bound_result_k}, $\Delta W(\mathcal{G}_i') < \Delta W(\mathcal{G}_i)$ and $\Delta W(\mathcal{G}_k') < \Delta W(\mathcal{G}_k)$ where $\mathcal{G}_i'$ and $\mathcal{G}_k'$ are the sub-graphs after the swap. Hence, the states are swapped until $n_i^j = n_k^l = 0$ or $n_i^l = n_k^j = n$. If $n_i^l = n$, then all the states of cluster $l$ are associated with agent $i$. If $n_i^j = 0$, then all the states $s_i^j\in Q_i^j$ which belong to cluster $j$ and were initially associated with agent $i$ are now with agent $k$. The states are swapped for all ${i, j, k, l\in\{1,2,\ldots,m\}}$ until the boundary cases of $n_i^j = n_k^l = 0$ or $n_i^l = n_k^j = n$ are reached. Since $\sum_{j=1}^m n_i^j = \sum_{i=1}^m n_i^j= n$ for all $i, j~\in~\{1,2,\ldots,m\}$, all the states of each cluster $j$ are associated with only one agent $i$, thus resulting in the partition $\mathcal{R}$. An agent cannot have two clusters because the initial partition has $n$ targets associated with each agent which is the same as the number of targets in each cluster. Hence, only swap operations happen between agents because a transfer operation increases $W(\mathcal{G}_i)$ for some agent $i$.
\end{proof}

 Therefore, Algorithm~\ref{alg_part} generates optimal partitions ${\mathcal{P}^* = \mathcal{R}}$ for Problem~\ref{probl_multi_ct} if the bounds on the hitting time of the states in the MDP independently satisfy the conditions in Theorem~\ref{cluster_is_optimal} and Theorem~\ref{cluster_alg}. One such MDP was illustrated in Fig.~\ref{fig:theorem}.

We assumed in Theorem~\ref{cluster_alg} that the number of targets assigned to each agent in the initial partition are all equal. If we do not have any prior knowledge about the MDP dynamics and location of targets, one naive way to partition would indeed be to assign equal number of targets to each agent. We expect a partition with equal sized target assignments to be optimal for Problem~\ref{probl_multi_ct} if the optimal hitting times and cover times are independent and uniformly distributed. However, this question formally remains open for future work. 

 We also assume in our work that the clusters defined in Definition~\ref{cluster_defn} have equal number of targets. We believe the ideas presented in our proof of Theorem~\ref{cluster_alg} can be extended to clusters with different number of targets. We could bound the change in sum of weights $\Delta W\left(\mathcal{G}_i\right), \Delta W\left(\mathcal{G}_k\right)$ for every pair of sub-graphs $\left(\mathcal{G}_i, \mathcal{G}_k\right)$ in terms of the number of targets $n_j$ in cluster $j$ where $i,j,k\in\{1,2,\ldots,m\}$. We could then derive conditions for a swap operation between the sub-graphs similar to~\eqref{bound_result}. We reserve this more detailed analysis for future work.     
 
Though Theorem~\ref{cluster_is_optimal} and Theorem~\ref{cluster_alg} apply only to MDPs with clustered targets, we show in the subsequent section with numerical experiments that Algorithm~\ref{alg_part} generates optimal partitions $\mathcal{P}^*$ which solves Problem~\ref{probl_multi_ct} for some MDPs that not necessarily have clustered targets. We also validate the algorithms for single agent described in Section~\ref{sec:exact_sol_sec} and Section~\ref{sec:sub_opt_single} using numerical experiments.

\section{Numerical Results}
\label{sec:numerical}

In this section, we validate our algorithms on different environments for both single-agent and multi-agent scenarios. Since previous work was on heuristics for single agent planning on graphs~\cite{heur_TSP}, we compare the performance and results of our single agent planning algorithm on random graphs. Our work is primarily motivated by stochastic dynamics and hence we present the numerical results of our algorithms on random MDPs. We also validate our algorithm on more realistic gridworld environment motivated by ocean dynamics.

\subsection{Random graphs}

In this section, we implement our heuristic procedure on graphs for single agent path planning and partitioning of targets to multiple agents.

\subsubsection{Single agent}
\label{res_single_graph}

In this section, we present our results for the single agent case. The paths produced by Algorithm~\ref{alg_heur_ct} to cover all states of some deterministic MDPs are given in Fig.~\ref{fig:opt_graphs_alg2}. We compare the performance of Algorithm~\ref{alg_heur_ct} and the nearest neighbor heuristic~\cite{heur_TSP}. We use policy iteration to compute the optimal cover time, but we could use any algorithm that solves the SSP problem to optimality on the product MDP~\cite{SSP_opt}. We compute the average value of the cover time by performing 1000 runs of the nearest neighbor heuristic. However, we do not need to run policy iteration and Algorithm~\ref{alg_heur_ct} 1000 times to get the cover time; instead, just running it once produces the cover time of the deterministic policy. We use $\varepsilon=10^{-20}$ and $\gamma=0.01$ for Algorithm~\ref{alg_heur_ct}. We choose such an $\varepsilon$ close to zero so that the computation of value function terminates after some reasonable number of iterations, while at the same time the computed approximate value function is close to the actual value function. If $\varepsilon=0$, the iterative procedure might not stop in reasonable time because of floating point errors, although the value function has converged. We choose $\gamma = 0.01$ in Algorithm~\ref{alg_heur_ct} so that the algorithm incorporates some look-ahead on how good the states are in future time. If $\gamma=0$, there is no look-ahead as described in Section~\ref{sec:sub_opt_single}. Hence, Algorithm~\ref{alg_heur_ct} would perform exactly as the nearest neighbor heuristic. As we increase $\gamma$, the rate of convergence to the optimal value function at each time step decreases considerably without much gain in the overall optimality of the cover time for the MDPs presented in this section.

\begin{figure}[!t]
     \centering
     \begin{subfigure}[h]{0.3\linewidth}
         \centering
         \includegraphics[width=0.53\linewidth]{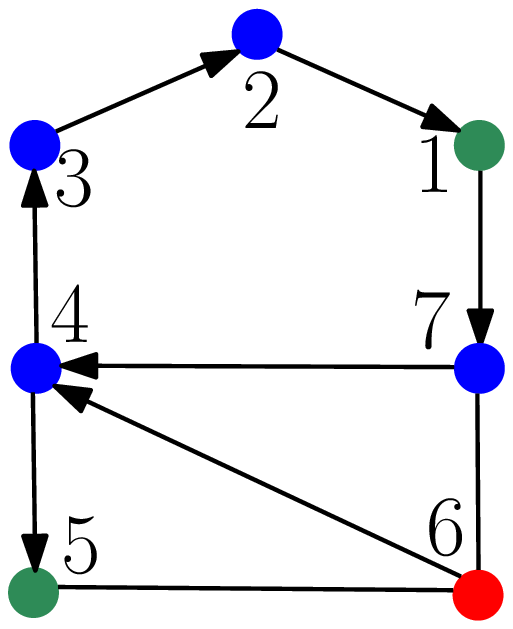}
         \caption{$6\small{\rightarrow}4\small{\rightarrow}3\small{\rightarrow}2\small{\rightarrow}1\small{\rightarrow}7\small{\rightarrow}4 \newline \small{\rightarrow}5$ is suboptimal.}
         \label{fig:rand_graph_1}
     \end{subfigure}
     \hspace{1cm}
     \begin{subfigure}[h]{0.3\linewidth}
         \centering
         \includegraphics[width=0.57\linewidth]{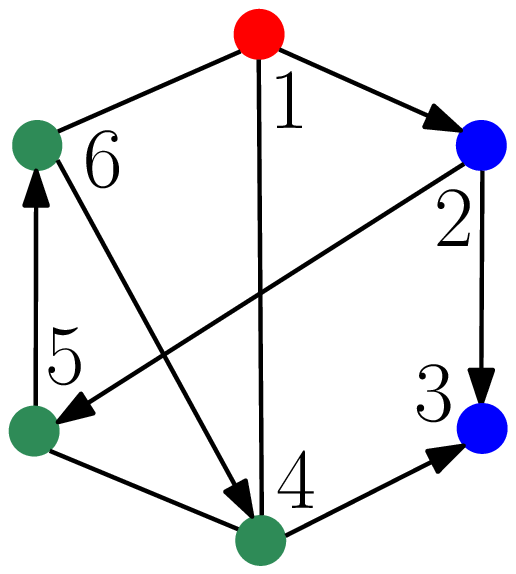}
         \caption{$1\small{\rightarrow}2\small{\rightarrow}5\small{\rightarrow}6\small{\rightarrow}4\small{\rightarrow}3$ is optimal.}
         \label{fig:VI_heur_opt_2}
     \end{subfigure}
     \\
      \begin{subfigure}[h]{\linewidth}
         \centering
         \includegraphics[width=0.6\linewidth]{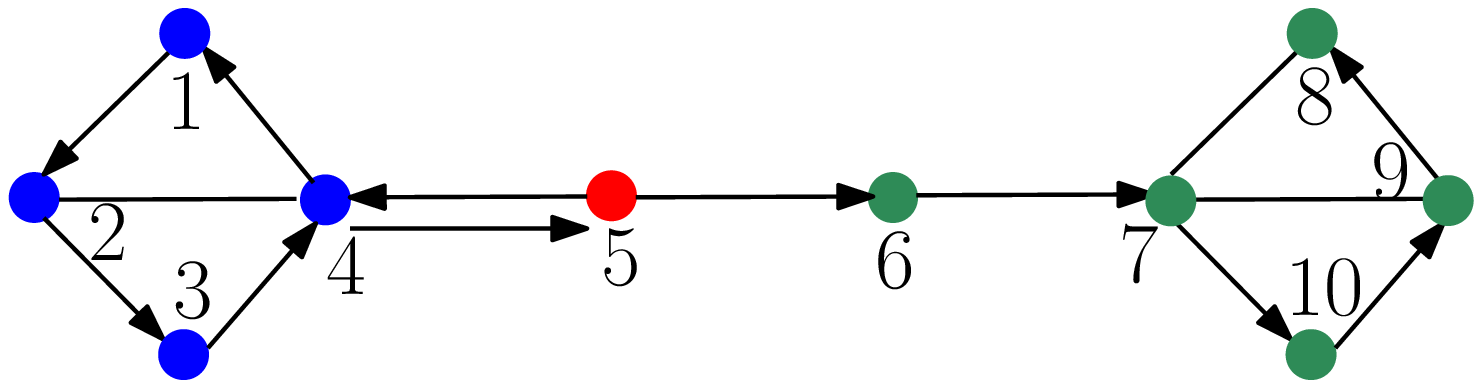}
         \caption{$5\small{\rightarrow}4\small{\rightarrow}1\small{\rightarrow}2\small{\rightarrow}3\small{\rightarrow}4\small{\rightarrow}5\small{\rightarrow}6\small{\rightarrow}7\small{\rightarrow}10\small{\rightarrow}9\small{\rightarrow}8$ is optimal.}
         \label{fig:VI_heur_opt}
     \end{subfigure}
     \caption{Paths produced by Algorithm~\ref{alg_heur_ct} to cover all states of some graphs and illustration of Algorithm~\ref{alg_part} with $m=2$ agents. The initial state is in red, and the states assigned to each agent are shown in green and blue, respectively.}
        \label{fig:opt_graphs_alg2}
\end{figure}


We choose the graphs in Fig.~\ref{fig:opt_graphs_alg2} to be significantly different than those in Fig.~\ref{fig:opt_graphs} to show that Algorithm~\ref{alg_heur_ct} is optimal for a larger class of graphs like those in Figs.~\ref{fig:VI_heur_opt_2} and~\ref{fig:VI_heur_opt}. The runtime of Algorithm~1 for the graphs in Fig.~\ref{fig:opt_graphs_alg2} is 100-1000 times faster than the optimal algorithm, with just a $5\%$ suboptimality on average. Though the nearest neighbour heuristic is 10~times faster than our heuristic, the average suboptimality is approximately $72\%$.


To illustrate our heuristic on more general graphs, we simulate 1000 runs of the nearest neighbor heuristic and Algorithm~\ref{alg_heur_ct}, with $\varepsilon=10^{-20}, \gamma = 0.01$, for random connected graphs. We present the cover time values and the average run-time for 1000 system runs of Algorithm~\ref{alg_heur_ct} and nearest neighbor heuristic in Table~\ref{graph_results}. The policy iteration procedure consumes an immense amount of runtime in MATLAB to compute the optimal cover time for larger than 11 target states, which we denote as `timeout' in Table~\ref{graph_results}. However, Algorithm~\ref{alg_heur_ct} and the nearest neighbor heuristic run much quicker than the optimal algorithm. The nearest neighbor heuristic naturally has better runtime than Algorithm~\ref{alg_heur_ct}, however, it yields significantly worse cover times. On the other hand, the cover times of Algorithm~\ref{alg_heur_ct} are always optimal or almost-optimal, whereas the average runtime is still on the order of 100-1000 times faster than the optimal algorithm. The average variance of the cover time for nearest neighbour heuristic is $54.84$ for the graphs in Table~\ref{graph_results}. We performed all the numerical experiments using MATLAB R2020a on a computer with an Intel Core i7 2.6 GHz processor and 16GB RAM.

\bgroup
\def\arraystretch{1}
\centering
\begin{table}[!t]
 \captionsetup{justification=centering}
\caption{Cover time comparison on random graphs for a single agent.}
\centering
\begin{tabular}{||p{0.4cm}|p{0.25cm}|p{0.75cm}|p{0.9cm}|p{0.5cm}|p{0.75cm}|p{0.75cm}|p{0.75cm}||}
\hline
\multicolumn{2}{||c|}{Graph} & \multicolumn{2}{|p{2cm}|}{Optimal algorithm (policy iteration)} & \multicolumn{2}{|p{1.5cm}|}{Algorithm~\ref{alg_heur_ct}} & \multicolumn{2}{|p{2cm}||}{Nearest neighbor heuristic}\\
\hline
$|S|$ & $|\mathcal{V}|$ & Optimal cover time & \textit{Runtime (sec)} & Cover time & \textit{Runtime (sec)} & Average cover time & \textit{Average runtime (sec)}\\
\hline
\hline
91	& 10 & 10 &	547.948 & 11 &	3.389 &	12.878  & 0.005	\\
\hline
50 & 10 & 10 & 192.798	& 10 & 0.243 & 12.489 &	0.003 \\
\hline
62 & 8 & 9 & 27.946	& 9	& 0.946	& 13.403 & 0.003 \\
\hline
97 & 10 & 11 & 726.563	& 12 & 0.616 & 15.977 & 0.004\\
\hline
200 & 10 & 11 & 4112.927 & 11 & 6.515 & 13.835 & 0.014\\
\hline
150 & 8 & 11 & 263.947 & 11	& 2.537 & 13.389  & 0.011\\
\hline
130 & 9 & 9	& 357.842 & 10 & 1.771 & 11.196  & 0.007\\
\hline
170 & 10 & 10 & 2897.701 & 10 & 3.709 & 13.483 & 0.013\\
\hline
250 & 9 & 10 & 2086.298	& 11 & 11.993 & 12.797  & 0.019\\
\hline
200 & 11 & 11 & 41473.81 & 11 & 6.94 & 13.484 & 0.013\\
\hline
1000 & 9 & 12 & 9167.786 & 13 & 15.894 & 26.087  & 0.09\\
\hline
180 & 12 & N/A & timeout & 12 & 7.427 & 15.988  & 0.017\\
\hline
200 & 15 & N/A & timeout & 16 & 9.271 & 21.232  & 0.02 \\
\hline
500 & 50 & N/A & timeout & 55 & 35.153 & 101.084  & 0.051\\
\hline
1000 & 100 & N/A & timeout & 113 & 78.472 & 212.96  & 0.101\\
\hline
500 & 80 & N/A & timeout & 89 & 70.285 & 185.17  & 0.06\\
\hline
\end{tabular}
\label{graph_results}
\end{table}
\egroup

\subsubsection{Multiple agents}
\label{rand_graph_sec_part}

In this section, we present our results of the partitioning algorithm for the multi-agent case. We use the greedy vertex $m-$center algorithm~\cite{greedy_k} to generate the initial partition $\mathcal{P}$ in line~3 of Algorithm~\ref{alg_part} for $m$ agents.

In Fig.~\ref{fig:opt_graphs_alg2}, we present the partitions generated by Algorithm~\ref{alg_part} for the same graphs discussed in Section~\ref{res_single_graph} with $m=2$ agents. In Fig.~\ref{fig:part_graphs}, we illustrate Algorithm~\ref{alg_part} for random graphs with $m=3$ agents and $\mathcal{V}=S$. In Table~\ref{part_graph_results}, we compare the cover time $\max_{i} C^{P_i, \pi_i^*}_{s_0}$ between the optimal partition $\mathcal{P}^*$ for Problem~\ref{probl_multi_ct} and the partition $\mathcal{P}$ generated by Algorithm~\ref{alg_part} for the graph scenarios in Fig.~\ref{fig:opt_graphs_alg2} and Fig.~\ref{fig:part_graphs}. We use a naive method to obtain the optimal partition $\mathcal{P}^*$ by brute-force search of the optimal cover time of all possible subsets of $\mathcal{V}$. The optimal cover time can be computed using policy iteration or any other algorithm that solves the SSP problem to optimality on a product MDP~\cite{SSP_opt}. From Table~\ref{part_graph_results}, the partition generated by Algorithm~\ref{alg_part} is optimal for graphs in Figs.~\ref{fig:VI_heur_opt_2},~\ref{fig:VI_heur_opt},~\ref{fig:part_graph_2} and~\ref{fig:part_graph_4}. The average runtime of brute-force search is $61.292$~seconds, whereas Algorithm~\ref{alg_part} consumes $0.01$~seconds on average for the graphs in Table~\ref{part_graph_results}. 


\begin{figure}[!b]
      \centering
     \begin{subfigure}[h]{0.3\linewidth}
        \centering
        \includegraphics[width=\linewidth]{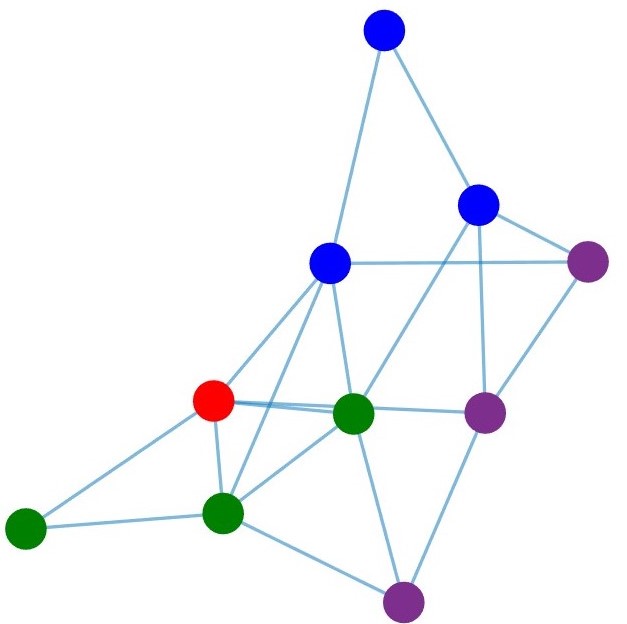}
        \caption{Random graph~1.}
        \label{fig:part_graph_1}
     \end{subfigure}     
     \hspace{1cm}
      \centering
     \begin{subfigure}[h]{0.3\linewidth}
        \centering
        \includegraphics[width=\linewidth]{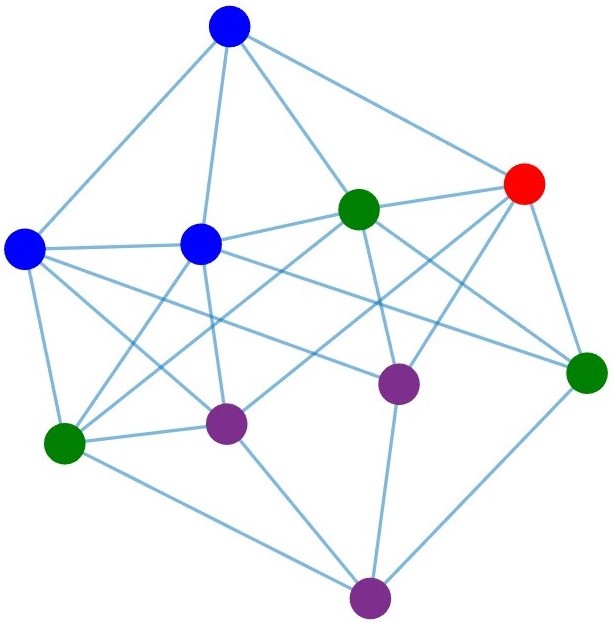}
        \caption{Random graph~2.}
        \label{fig:part_graph_2}
     \end{subfigure}
     \\
       \centering
     \begin{subfigure}[h]{0.3\linewidth}
        \centering
        \includegraphics[width=\linewidth]{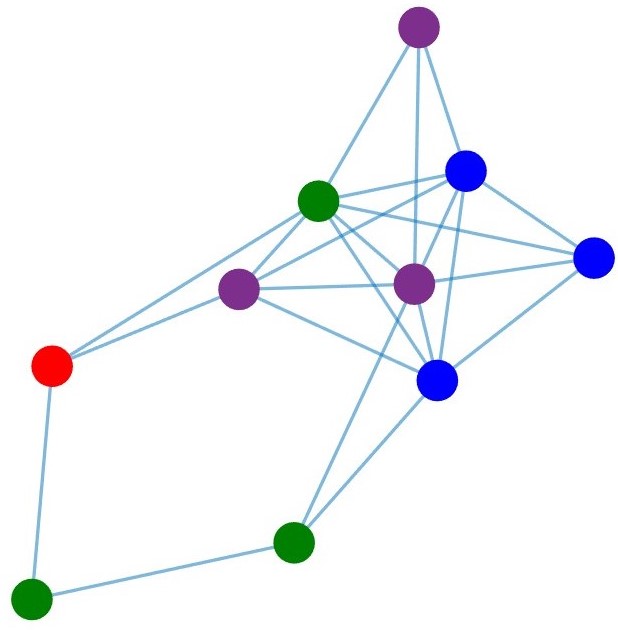}
        \caption{Random graph~3.}
        \label{fig:part_graph_3}
     \end{subfigure}
     \hspace{1cm}
      \centering
     \begin{subfigure}[h]{0.3\linewidth}
        \centering
        \includegraphics[width=\linewidth]{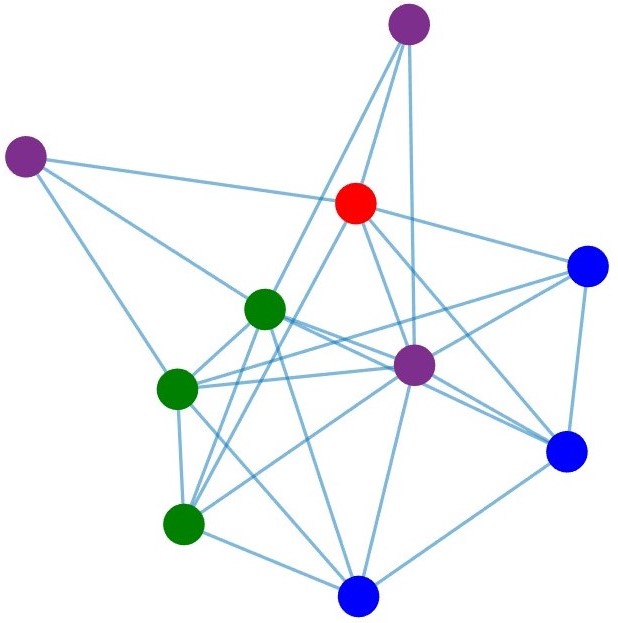}
        \caption{Random graph~4.}
        \label{fig:part_graph_4}
     \end{subfigure}
        \caption{Illustration of Algorithm~\ref{alg_part} on random graph scenarios with $|\mathcal{V}| = 10$ for $m=3$ agents. The initial state is in red, and the states assigned to each agent are shown in green, blue and violet, respectively.}
        \label{fig:part_graphs}
\end{figure}

\subsection{Random MDPs}

In this section, we implement our algorithms on MDPs for single agent path planning and target assignment to multiple agents.

\subsubsection{Single  agent}

In this section, we validate Algorithm~\ref{alg_heur_ct} for the single-agent case with $\varepsilon=10^{-20}$ and $\gamma=0.01$, on general MDPs. We test the algorithms on MDPs where the number of states and targets are selected at random, and for each state $s\in~S$ and action $a\in A$, the transition probabilities $\mathcal{T}(s,a,s')$ form a discrete uniform distribution with all $s'\in~S$ as the support. We select the initial state and the set to be covered at random. We present the optimal expected cover time computed using policy iteration, and the average cover time and average runtime for 1000 system runs of Algorithm~\ref{alg_heur_ct} in Table~\ref{MDP_results}. The average variance of the cover time obtained by Algorithm~\ref{alg_heur_ct} for the MDPs in Table~\ref{MDP_results} is $10.48$. The average suboptimality of Algorithm~\ref{alg_heur_ct} for the MDPs in Table~\ref{MDP_results} is 19.658\%. This suboptimality is offset by a considerable reduction in runtime. The average runtime of the MDPs in Table~\ref{MDP_results} for the optimal algorithm using policy iteration is 13404.225 sec, and Algorithm~\ref{alg_heur_ct} is 22.262 sec.

\bgroup
\def\arraystretch{1}
\begin{table}[!t]
\centering
\captionsetup{justification=centering}
\caption{Cover time comparison for multiple agents for the graphs in Fig.~\ref{fig:opt_graphs_alg2} and Fig.~\ref{fig:part_graphs}.}
\begin{tabular}{||c||p{2cm}|p{0.8cm}|p{2cm}|p{0.8cm}||} 
 \hline
Graph & \multicolumn{2}{|c|}{Optimal partition $\mathcal{P}^*$} & \multicolumn{2}{|c||}{Partition $\mathcal{P}$ by Algorithm~\ref{alg_part}} \\ [0.5ex]
\cline{2-5}
& Cover time $\max_{i} C^{P_i,\pi_i^*}_{s_0}$  & \textit{Runtime (sec)} & Cover time $\max_{i} C^{P_i,\pi_i^*}_{s_0}$ & \textit{Runtime (sec)} \\ 
 \hline\hline
   Fig.~\ref{fig:rand_graph_1} & 3 &  31.21 & 4 & 0.003\\
 \hline
   Fig.~\ref{fig:VI_heur_opt_2} & 3 & 32.13 & 3 & 0.003 \\
 \hline
   Fig.~\ref{fig:VI_heur_opt} & 5 & 40.213 & 5 & 0.005\\
 \hline
 Fig.~\ref{fig:part_graph_1} & 3 & 81.213 & 4 & 0.019 \\
 \hline
 Fig.~\ref{fig:part_graph_2} & 3 & 80.15 & 3 & 0.013\\
 \hline
Fig.~\ref{fig:part_graph_3} & 3 & 79.91 & 4& 0.009 \\
 \hline
 Fig.~\ref{fig:part_graph_4} & 3 & 84.217 & 3 & 0.021\\
 \hline
\end{tabular}
\label{part_graph_results}
\end{table}
\egroup

\bgroup
\def\arraystretch{1}
\centering
\begin{table}[!b]
\captionsetup{justification=centering}
\caption{Cover time comparison on random MDPs for a single agent.}
\centering
\begin{tabular}{||p{0.4cm}|p{0.2cm}|p{0.9cm}|p{1.1cm}|p{1cm}|p{1cm} | p{1cm}||}
\hline
\multicolumn{2}{||c|}{MDP} & \multicolumn{2}{|p{2.3cm}|}{Optimal algorithm (policy iteration)} & \multicolumn{3}{|c||}{Algorithm~\ref{alg_heur_ct}}\\
\hline
$|S|$  & $|\mathcal{V}|$ & Expected cover time & \textit{Runtime (sec)} & Average cover time & Variance of cover time & \textit{Average runtime (sec)} \\
\hline
\hline
50 & 10 & 36.732 & 1573.755	& 38.98	& 5.949 & 0.141	\\
\hline
70  & 8 & 45.394	& 395.643 & 49.253 & 7.801 & 0.19\\
\hline
80  & 10 & 37.603 & 4166.8411 & 42.91 & 5.642 & 0.193\\
\hline
100  & 9 & 51.517 & 1850.285	& 60.878 & 11.133 & 0.484\\
\hline
100  & 10 & 52.7734	& 6400.20	& 57.847	& 8.397 & 0.514\\
\hline
200  & 10 & 68.355 & 59338.536	& 86.148	& 14.679 & 5.266\\
\hline
200  & 9 & 56.348 & 3862.91 & 74.056 & 6.173 & 2.661\\
\hline
150  & 8 & 59.714 & 741.04 & 72.89 & 10.234 & 0.17\\
\hline
170  & 10 & 73.287 & 43615.027	& 93.675	& 18.311 & 1.035\\
\hline
120  & 10 & 57.024 & 14654.176 & 73.824 & 12.493 & 0.772\\
\hline
1000  & 9 & 90.067 & 10081.21 & 111.59 & 12.64 & 5.823\\
\hline
500  & 10 & N/A & timeout & 110.659 & 14.756 & 13.96\\
\hline
500  & 50 & N/A & timeout & 859.12 & 19.615 & 47.965\\
\hline
1000  & 12 & N/A & timeout & 643.974 & 18.531 & 31.039\\
\hline
\end{tabular}
\label{MDP_results}
\end{table}
\egroup

\subsubsection{Multiple agents}
\label{num_mdp_multi}

In this section, to illustrate the heuristic partitioning procedure on more general MDPs, we implement Algorithm~\ref{alg_part} on the random MDPs presented in Table~\ref{MDP_results}. In Table~\ref{part_MDP_results}, we present the optimal expected cover time computed using the naive method described in Section~\ref{rand_graph_sec_part}, and the expected cover time of the partitions generated by Algorithm~\ref{alg_part} for random MDPs with $m=3$ agents. We implement Algorithm~\ref{alg_heur_ct} for each agent using the partition generated by Algorithm~\ref{alg_part} as well as the optimal partition. We present the average cover time and average runtime for 1000 system runs of Algorithm~\ref{alg_heur_ct} in Table~\ref{part_MDP_results}. The cover time for one run of Algorithm~\ref{alg_heur_ct} for multiple agents is computed as $\max_i(C_{s_0}^{P_i})$ where $P_i$ are the targets assigned to agent $i$ and $C_{s_0}^{P_i}$ is the cover time of agent~$i$. 

{We present the results only for $3$ agents because of the saturation of cover time with a larger number of agents for the number of targets in Table~\ref{part_MDP_results}. In Fig.~\ref{fig:magent_scale}, we plot the ratio of the average cover time for the multi-agent case to the single-agent case for three MDP scenarios, when the number of agents are increased. The average cover time is computed for 1000 system runs of Algorithm~\ref{alg_heur_ct} on the partition $\mathcal{P}$ generated by Algorithm~\ref{alg_part}. The average runtime of Algorithm~\ref{alg_part} marginally increases with an increase in the number of agents $m$, but it also eventually saturates. We keep the full analysis of the effect of number of agents on the cover time and performance of Algorithm~\ref{alg_part} for future work.

\begin{figure}[!b]
         \centering        \includegraphics[trim={0cm 0cm 0cm 0cm},clip,width=0.8\linewidth]{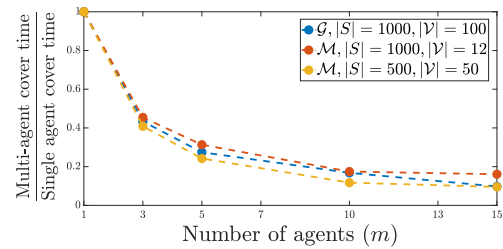}
         \caption{Effect of saturation of average cover time with increasing number of agents when Algorithm~\ref{alg_heur_ct} is implemented on the partition $\mathcal{P}$ by Algorithm~\ref{alg_part}.}
         \label{fig:magent_scale}
\end{figure}

From Table~\ref{part_MDP_results}, the expected cover time of the optimal single-agent algorithm on the partition $\mathcal{P}$ generated by Algorithm~\ref{alg_part} is either optimal or almost-optimal when compared to the optimal partition $\mathcal{P}^*$. The average cover time is similar when Algorithm~\ref{alg_heur_ct} is implemented on the partition generated by Algorithm~\ref{alg_part} and the optimal partition. Hence, the partition generated by Algorithm~\ref{alg_part} is the same or similar to the optimal partition. Since Algorithm~\ref{alg_heur_ct} is implemented independently for each agent in the multi-agent case, there is a slight increase of suboptimality in the cover time of Algorithm~\ref{alg_heur_ct} when compared to the single agent case. Though the average cover time when implementing Algorithm~\ref{alg_heur_ct} is more than one half of the optimal cover time for $m=3$ agents, the runtime of our heuristic partitioning and path planning procedure is $10^5$ times faster than the optimal partitioning and optimal path planning method.

\subsection{Ocean dynamics}

In this section, we implement our heuristic procedures for single agent path planning, and assignment of multiple-targets to multiple agents on stochastic gridworlds modeled by realistic ocean currents~\cite{oceans}.

\subsubsection{Single agent}
\label{num_single_ocean}

In this section, we simulate Algorithm~\ref{alg_heur_ct} with transition dynamics motivated by an autonomous underwater vehicle operating in an oceanic environment~\cite{oceans}, modeled as a $20 \times 20$ grid. The vehicle's transitions are to move one step north, west, south, or east of its current state, except at the edges of the environment. The set to be covered constitutes regions associated with formation and evolution of algal blooms~\cite{oceans}. The uncertainty and high variability of ocean currents in space and time demand the use of stochastic transition dynamics. We use the MDP environment in~\cite{MDP_env} to generate realistic transition probabilities from a Gaussian distribution of ocean models~\cite{oceans}. We illustrate Problem~\ref{probl_ct} on the resulting gridworld using Algorithm~\ref{alg_heur_ct} with ${\varepsilon = 10^{-20}, \gamma = 0.4}$ in Fig.~\ref{fig:grid_sim}.  We increase the value of $\gamma$ compared to previous experiments because there is a considerable decrease in suboptimality for the MDPs considered in this section, even though there is a slight decrease in rate of convergence to the optimal value function at each time step. The cover time for the path presented in Fig.~\ref{fig:grid_sim} is 88. The two ``blips'' in the final stretch of the path before completing the mission are a consequence of stochastic dynamics. The average cover time for 1000 runs of Algorithm~\ref{alg_heur_ct} for the problem in Fig.~\ref{fig:grid_sim} is 84.87 with a variance of $21.267$, and the average runtime is 9.0119 sec. The optimal expected cover time for the scenario in Fig.~\ref{fig:grid_sim} is 71.8534 which is computed using policy iteration with a runtime of 5760 sec.

\begin{figure}[!b]
         \centering        \includegraphics[width=0.5\linewidth]{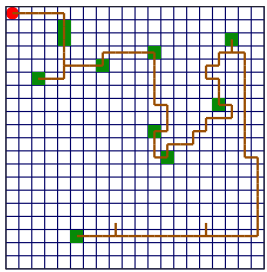}         \caption{A representative path produced by Algorithm~\ref{alg_heur_ct} to cover 10 states on a $20 \times 20$ gridworld with stochastic dynamics. The red dot is the initial state, green squares are the targets, and the path of the agent is denoted by brown lines.}
         \label{fig:grid_sim}
\end{figure}

\subsubsection{Multiple agents}

In this section, we implement Algorithm~\ref{alg_part} to assign targets to agents motivated by a team of autonomous underwater vehicles operating in an ocean~\cite{oceans}, with the same environment and agents used in Section~\ref{num_single_ocean}. We illustrate Problem~\ref{probl_multi_ct} on the resulting gridworlds using Algorithm~\ref{alg_part} in Fig.~\ref{fig:sim_part_all} and Fig.~\ref{fig:sim_part_all_impl}.

In Fig.~\ref{fig:sim_part_all}, we depict the assignment of targets to agents on a $10\times~10$ stochastic gridworld using Algorithm~\ref{alg_part}. The scenario in Fig.~\ref{fig:opt_alg_part_10grid_9T} has clustered target states, and the initial state of the agents is approximately equidistant from all the clusters. Therefore, Algorithm~\ref{alg_part} has naturally generated the clustered partition which is also verified to be the optimal partition. The optimal partition is obtained by doing a brute-force search of all possible optimal expected cover times computed using policy iteration. In Fig.~\ref{fig:opt_alg_part_10grid_10T},
the initial state and the set of target states are selected at random. Algorithm~\ref{alg_part} has again generated the optimal partition for the scenario in Fig.~\ref{fig:opt_alg_part_10grid_10T} where the optimality is verified by brute-force search.

\begin{figure}[!b]
     \centering
      \begin{subfigure}[h]{0.4\linewidth}
        \centering
        \includegraphics[width=\textwidth]{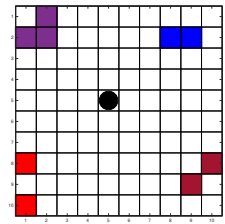}
        \caption{$m=4$ agents with $|\mathcal{V}|=9$ targets.}
        \label{fig:opt_alg_part_10grid_9T}
     \end{subfigure}
      \hfill
       \begin{subfigure}[h]{0.4\linewidth}
        \centering
        \includegraphics[width=\textwidth]{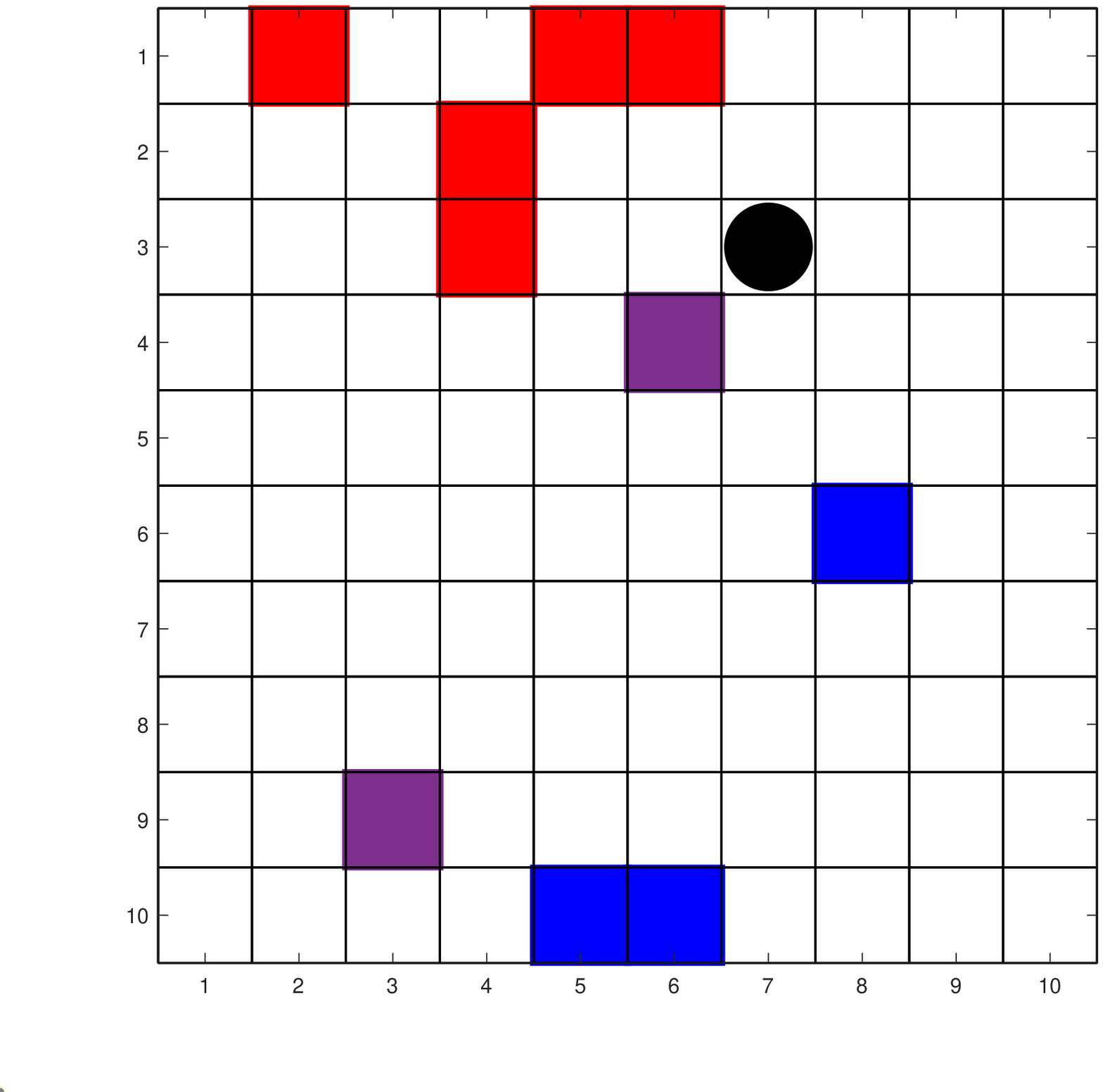}
        \caption{$m=3$ agents with $|\mathcal{V}|=10$ targets.}
        \label{fig:opt_alg_part_10grid_10T}
     \end{subfigure}
     \caption{Illustration of Algorithm~\ref{alg_part} on ${10\times10}$ gridworld environments with stochastic transition dynamics. The black dot is the initial state, and the target assignment for each agent are denoted by blue, violet, red, and brown squares.}
\label{fig:sim_part_all}
\end{figure}

\begin{figure}[!b]
    \begin{subfigure}[h]{0.45\linewidth}
        \centering
        \includegraphics[width=\textwidth]{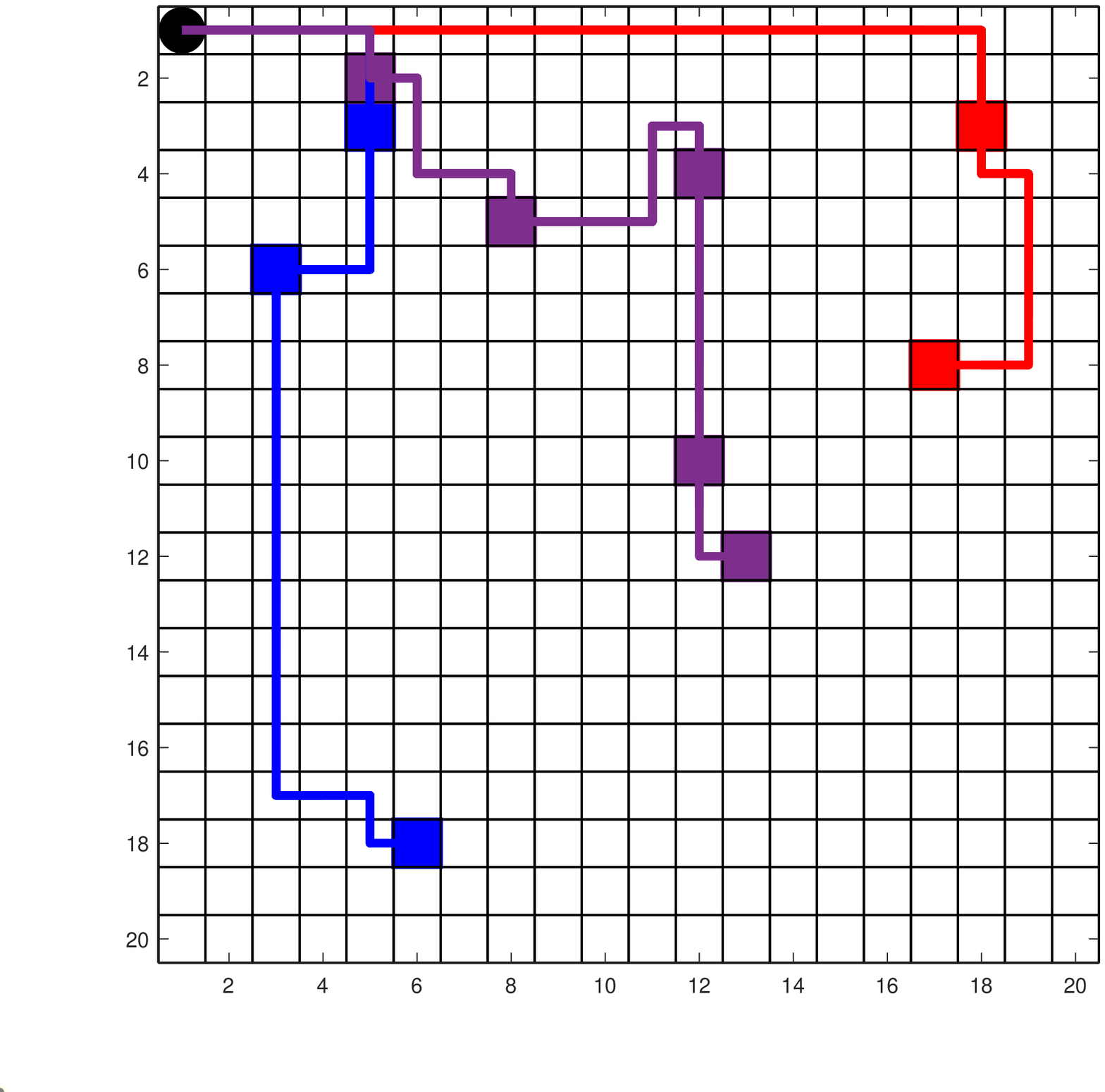}
        \caption{$m=3$ agents with $|\mathcal{V}|=10$ targets.}
        \label{fig:opt_alg_part_pranay_grid}
     \end{subfigure}
    \hfill
     \begin{subfigure}[h]{0.45\linewidth}
        \centering
        \includegraphics[width=\textwidth]{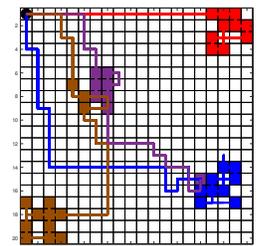}
        \caption{$m=4$ agents with $|\mathcal{V}|=40$ targets.}
        \label{fig:sim_part_heur}
     \end{subfigure}
     \caption{A representative path generated by Algorithm~\ref{alg_heur_ct} after the partitioning by Algorithm~\ref{alg_part} on ${20\times20}$ gridworld environments with stochastic dynamics. The black dot is the initial state, the path and targets for each agent are denoted by blue, violet, red and brown lines and squares, respectively.}
\label{fig:sim_part_all_impl}
\end{figure}

In Fig.~\ref{fig:sim_part_all_impl}, we illustrate the assignment of targets to agents on $20\times~20$ stochastic gridworlds using Algorithm~\ref{alg_part} and also implement Algorithm~\ref{alg_heur_ct} with $\varepsilon = 10^{-20}, \gamma = 0.7$ to synthesize each agent's policy to visit the targets assigned to them. In Fig.~\ref{fig:opt_alg_part_pranay_grid}, we depict the same scenario presented in Fig.~\ref{fig:grid_sim} but with $m=3$ agents. Algorithm~\ref{alg_part} has generated the optimal partition for Fig.~\ref{fig:opt_alg_part_pranay_grid}. The optimal partition can be verified by a brute force method which requires 1586~sec whereas Algorithm~\ref{alg_part} requires 3.742~sec. The cover time for the paths presented in Fig.~\ref{fig:opt_alg_part_pranay_grid} is 27. In Table~\ref{part_MDP_results}, we present the expected cover time of the optimal partition and the partition generated by Algorithm~\ref{alg_part} for the scenarios in Fig.~\ref{fig:sim_part_all} and Fig.~\ref{fig:sim_part_all_impl}. We also implement Algorithm~\ref{alg_heur_ct} as described in Section~\ref{num_mdp_multi} for multiple agents and compare the respective runtime. From Table~\ref{part_MDP_results}, Algorithm~\ref{alg_part} is $10^4$ times faster than the naive procedure, but generating the optimal partition for the gridworld scenarios of Fig.~\ref{fig:sim_part_all} and Fig.~\ref{fig:opt_alg_part_pranay_grid}.  We note that the average cover time of Algorithm~\ref{alg_heur_ct} on the heuristic partition generated by Algorithm~\ref{alg_part} is better than the average cover time of Algorithm~\ref{alg_heur_ct} on the optimal partition for the scenario in Fig.~\ref{fig:opt_alg_part_pranay_grid}. This could be because the suboptimality of Algorithm~\ref{alg_heur_ct} on the optimal partition might be worse than the suboptimality of Algorithm~\ref{alg_heur_ct} on the partition generated by Algorithm~\ref{alg_part} for the planning mission in Fig.~\ref{fig:opt_alg_part_pranay_grid}. Algorithm~\ref{alg_heur_ct} has an average suboptimality of $24~\%$ for the gridworld environments in {Table~\ref{part_MDP_results} when compared to a $50~\%$ suboptimality for the random MDPs in Table~\ref{part_MDP_results}. Most of the real-world environments have sparse dynamics~\cite{oceans, stoch_traffic} like the gridworld environment, and our algorithms have a lower suboptimality for such scenarios when compared to random MDPs. In particular, for the scenario in Fig.~\ref{fig:opt_alg_part_pranay_grid}, the average cover time using Algorithm~\ref{alg_heur_ct} for $m=3$ agents is only $5\%$ greater than the optimal expected cover time. This suboptimality is offset by a considerable reduction in runtime as given in Table~\ref{part_MDP_results}.

\bgroup
\def\arraystretch{1}
\centering
\begin{table*}[!b]
 \captionsetup{justification=centering}
\caption{Cover time comparison on the random MDPs presented in Table~\ref{MDP_results} with $m=3$ agents, and the gridworld environments in Fig.~\ref{fig:sim_part_all} and Fig.~\ref{fig:sim_part_all_impl}.}
\centering
\begin{tabular}{||p{0.4cm}|p{0.25cm}|p{2cm}|p{1.1cm}|p{2cm}|p{1.1cm}|p{2cm}|p{1cm}|p{2cm}|p{1cm}||}
\hline
\multicolumn{2}{||c|}{MDP} & \multicolumn{2}{|p{3.1cm}|}{Optimal single-agent algorithm on optimal partition $\mathcal{P}^*$} & \multicolumn{2}{|p{3.1cm}|}{Optimal single-agent algorithm on partition $\mathcal{P}$ by Algorithm~\ref{alg_part}} & \multicolumn{2}{|p{3cm}|}{Algorithm~\ref{alg_heur_ct} on optimal partition $\mathcal{P}^*$} & \multicolumn{2}{|p{3cm}||}{Algorithm~\ref{alg_heur_ct} on partition~$\mathcal{P}$ by Algorithm~\ref{alg_part}}\\
\hline
$|S|$ & $|\mathcal{V}|$ & Expected cover time $\max_{i} \mathbb{E}^{\pi_i^*}\left[C^{P_i}_{s_0}\right]$ & \textit{Runtime (sec)} & Expected cover time $\max_{i} \mathbb{E}^{\pi_i^*}\left[C^{P_i}_{s_0}\right]$ & \textit{Runtime (sec)} & Average cover time & \textit{Average runtime (sec)} & 
Average cover time & \textit{Average runtime (sec)} \\
\hline
\hline
50  & 10 & 17.77 &  1760.475 & 19.293 & 1573.819 & 26.749  & 186.796 & 27.754 & 0.069\\
\hline
70  & 8 & 23.074 & 407.727 & 23.074 & 395.649 & 34.622 & 12.176 & 35.663 &0.083 \\
\hline
80  & 10 & 17.6	& 4357.343 & 17.63 & 4166.866 & 26.818 &  190.694 & 26.766 & 0.196 \\
\hline
100  & 9 & 24.937 & 1927.184 & 25.094 & 1850.321 & 38.63	&  77.076 & 38.906 & 0.163 \\
\hline
100  & 10 & 25.453 & 6672.765 & 25.453	& 6400.207 & 38.575	&  272.768 & 39.382 & 0.183 \\
\hline
200  & 10 & 32.572 & 59529.463 & 34.491 & 59338.545 & 48.773 &  191.459 & 50.725 & 0.478 \\
\hline
200  & 9 & 28.8825 & 3889.93 & 29.523 & 3862.914 & 41.333 &  27.497 & 41.316 & 0.424\\
\hline
150  & 8 & 28.882 & 745.099 & 29.523 & 741.047 & 46.612 &  4.273 & 47.933 & 0.201\\
\hline
170  & 10 & 34.5429 & 43801.439 & 35.206 & 43615.032 & 54.621 & 186.73 & 54.488 & 0.284 \\
\hline
120  & 10 & 27.076 & 14830.499 & 27.907 & 14654.199 & 42.301 &  176.576 & 43.091 & 0.231\\
\hline
1000 & 9 & 35.175 & 10693.451 & 36.798 & 10511.28 & 49.25 & 51.982 & 51.02 & 2.364\\
\hline
500  & 10 & N/A & timeout & N/A & timeout &  N/A & timeout & 62.953 & 7.67\\
\hline
500  & 50 & N/A & timeout &  N/A & timeout &  N/A & timeout & 351.067 & 30.698\\
\hline
1000  & 12 & N/A & timeout & N/A & timeout &  N/A & timeout & 292.49 & 20.12\\
\hline
\multicolumn{2}{||c|}{Fig.~\ref{fig:opt_alg_part_10grid_9T}} & 19.685 &  3437 & 19.685 & 2027.843 & 25.601 & 1410.286 & 25.68 & 1.129\\
\hline
\multicolumn{2}{||c|}{Fig.~\ref{fig:opt_alg_part_10grid_10T}} & 23.562 & 4160.2 & 23.562 & 2653.843 & 31.325 & 1507.505 & 32.057 & 1.148 \\
\hline
 \multicolumn{2}{||c|}{Fig.~\ref{fig:opt_alg_part_pranay_grid}} & 29.947 & 7346 & 29.947 & 5762.169 &  32.007 & 1589.528 & 31.63 & 5.86 \\
 \hline
  \multicolumn{2}{||c|}{Fig.~\ref{fig:sim_part_heur}} & N/A & timeout & N/A & timeout &  N/A & timeout & 53.52 & 16.82 \\
 \hline
\end{tabular}
\label{part_MDP_results}
\end{table*}
\egroup

In Fig.~\ref{fig:sim_part_heur}, we depict a scenario with $|\mathcal{V}|=40$ clustered target states and $m=4$ agents.  Since some clusters are farther from the initial state than other clusters, assigning just one agent to the farthest cluster and one other agent to the nearest cluster might not be optimal. Accordingly, the target assignment obtained by Algorithm~\ref{alg_part} is not the clustered partition. Two agents (brown and violet) are assigned some targets in the cluster closer to the initial state, along with some targets in clusters that are further away. The cover time results given below validate our argument that the clustered partition might not be optimal for the scenario in Fig.~\ref{fig:sim_part_heur}. The cover time for the paths presented in Fig.~\ref{fig:sim_part_heur} is 48. The average cover time for 100 runs of Algorithm~\ref{alg_heur_ct} for the partition in Fig.~\ref{fig:sim_part_heur} is 53.52 and the variance is $18.832$. The average cover time for $100$ runs of Algorithm~1 for the partition with natural clustering is $61.97$ and the variance is $17.821$. The average runtime for 100 runs of the complete mission to jointly visit the targets by multiple agents is 16.82 sec. On the other hand, it is infeasible to solve Problem~\ref{probl_multi_ct} by a brute force procedure. The size of the MDP and target set in Fig.~\ref{fig:sim_part_heur} requires $1.0995\times~10^{12}$ expected cover time values to be computed at every step of the optimal policy iteration procedure to solve Problem~\ref{probl_ct}. Then, all the $1.0995\times~10^{12}$ expected cover time values and the corresponding subsets of the target set should be searched for the optimal partition that solves Problem~\ref{probl_multi_ct}. 

\section{Conclusion and Future work}
\label{sec:conclus}

In this paper, we formulated a path planning problem for a team of agents to jointly visit a set of target states in minimum expected time on a Markov decision process. We showed that the decision version of the planning problem on a graph for the single-agent case is the Hamiltonian path problem which is NP-complete. We also showed that the path planning problem with multiple targets is a SSP problem on a product MDP and thus policy iteration can be adopted as the optimal algorithm that is exponential in the number of target states. We utilized policy iteration as a motivation to propose a suboptimal algorithm based on an approximate value function which consumes polynomial number of operations at each time step. We proved that our heuristic algorithm eventually visits the target states with probability 1, and generates optimal solutions for certain classes of deterministic MDPs, i.e., graphs. 


For the multi-agent case, we formulated a problem of assigning multiple targets to multiple agents such that the largest optimal expected cover time among the multiple agents is minimal. The average length of a Hamiltonian path on a model graph was used as a heuristic to approximate the optimal cover time. We adopted a heuristic partitioning procedure that attempts to solve the $m$-TSP on the model graph, which also solves our multi-agent problem on the original MDP. We proved that the heuristic procedure generates optimal partitions for clustered target states. We validated our heuristic procedures on more general MDPs, as well as gridworld environments motivated by realistic considerations of ocean currents.


In subsequent work we aim to provide theoretical guarantees on the suboptimality of our heuristic algorithms. An interesting direction is to find the relationship between jointly optimizing for the policies of multiple agents and implementing the optimal policy for each agent using the optimal partition. One immediate extension of our work would be to derive conditions of optimality with our heuristic algorithm for clusters with different number of targets. It would be interesting to explore how the initial partition could affect the partition generated by our heuristic procedure. The optimality of equal sized initial partition when the hitting times are assumed to be uniformly distributed is another interesting theoretical question. We think an analysis in the decrease in cover time with increasing number of agents is an interesting direction. We also see a need for developing a unified framework to integrate our partitioning algorithm and the path planning heuristic for complex real-life problems. Such problems could have different features in the optimization objective such as resilience, dynamic travel time and different initial state for each agent.

\section*{Acknowledgment}

We thank Pranay Thangeda for helping us generate stochastic gridworlds using models of ocean currents. We also thank Anakin Dey for helping us review the writing of the manuscript.     



\bibliographystyle{ieeetr}
\bibliography{root} 

\begin{IEEEbiography}[{\includegraphics[width=1in,height=1.25in,clip,keepaspectratio]{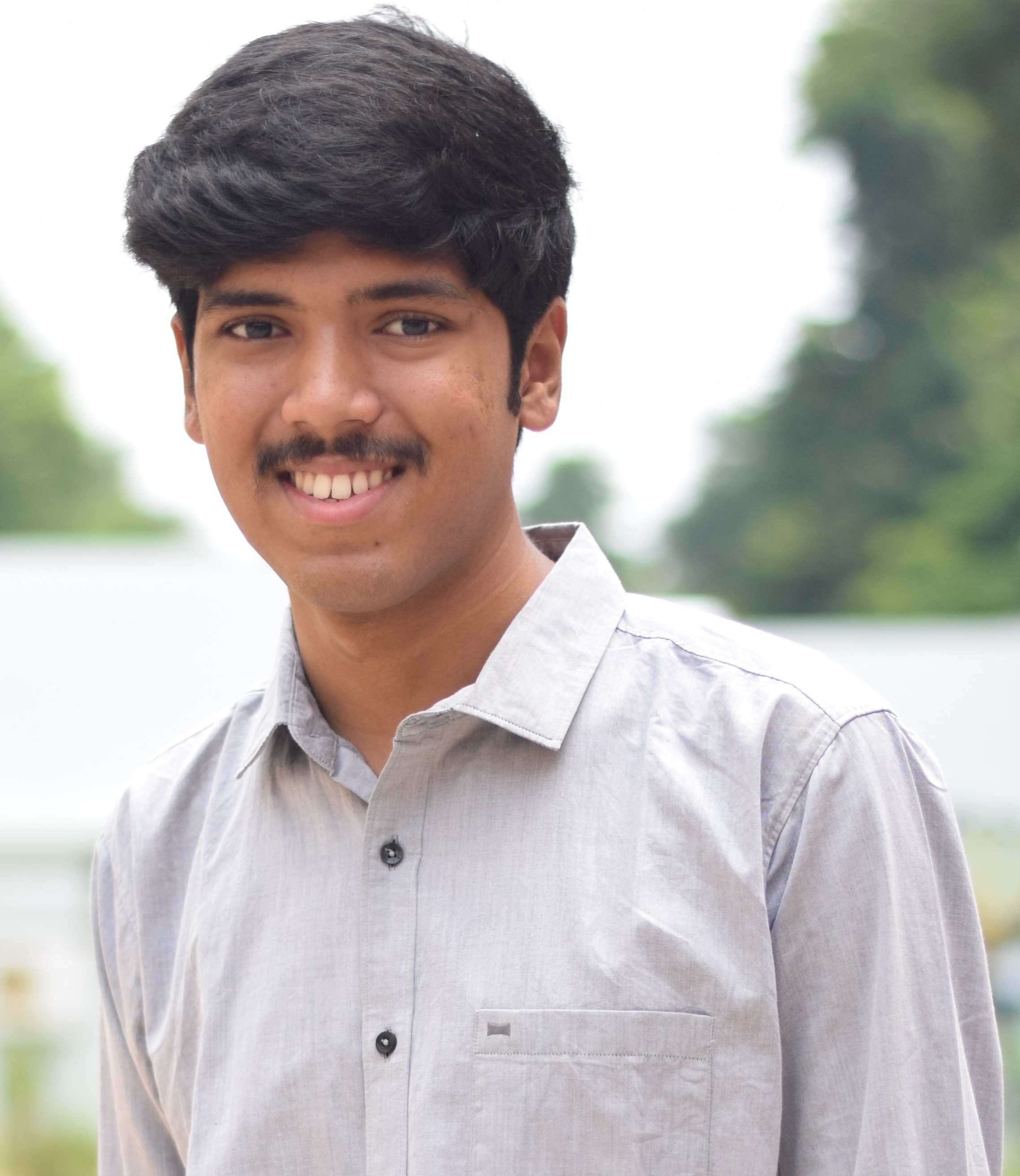}}]{Farhad Nawaz} is a Ph.D. student in Electrical and Systems Engineering at the University of Pennsylvania. He received his M.S. degree in Aerospace Engineering from the University of Illinois Urbana-Champaign in May 2021. His research interests lie in the areas of combining machine learning and control theory for problems in uncertain and complex dynamical systems. He envisions developing intelligent control frameworks for autonomous systems.
\end{IEEEbiography}

\begin{IEEEbiography}[{\includegraphics[width=1in,height=1.25in,clip,keepaspectratio]{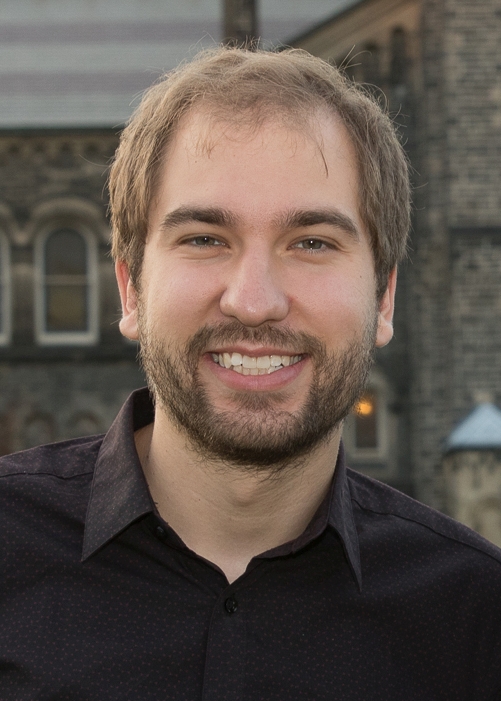}}]{Melkior Ornik} is an assistant professor in the Department of Aerospace Engineering and the Coordinated Science Laboratory at the University of Illinois Urbana-Champaign. He received his Ph.D. degree from the University of Toronto in 2017. His research focuses on developing theory and algorithms for learning and
planning of autonomous systems operating in uncertain, complex and changing environments, as well as in scenarios where only limited knowledge of the system is available.
\end{IEEEbiography}

\end{document}